\documentclass[12pt,draft]{amsart}

\subjclass[2010]{Primary: 11N25, Secondary: 11N37}

\author{Paul Pollack}
\address{University of British Columbia\\ Department of Mathematics \\ 1984 Mathematics Road\\ Vancouver, British Columbia V6T 1Z2, Canada}

\address{Simon Fraser University\\Department of Mathematics\\ Burnaby, British Columbia V5A 1S6, Canada}

\email{pollack@math.ubc.ca}

\author{Lola Thompson}
\address{Dartmouth College\\Department of Mathematics\\6188 Kemeny Hall\\Hanover, New Hampshire 03755, United States}
\email{lauren.a.thompson@dartmouth.edu}
 
\title{Practical pretenders}
\usepackage{amsmath,amssymb,amsthm,geometry,url}
\geometry{left=1.1in, right=1.1in, top=.72in, bottom=.72in}
\DeclareMathAlphabet{\curly}{U}{rsfs}{m}{n}
\newtheorem{thm}{Theorem}[section]
\newtheorem{prop}[thm]{Proposition}

\newtheorem{cor}[thm]{Corollary}

\newtheorem{lem}[thm]{Lemma}
\theoremstyle{remark}
\theoremstyle{definition}

\newtheorem*{rmk}{Remark}

\begin{document}
\def\phi{\varphi}
\renewcommand{\labelenumi}{(\roman{enumi})}
\def\polhk#1{\setbox0=\hbox{#1}{\ooalign{\hidewidth
    \lower1.5ex\hbox{`}\hidewidth\crcr\unhbox0}}}
\newcommand{\del}{\ensuremath{\delta}}
\def\A{\curly{A}}
\def\B{\curly{B}}
\def\e{\mathrm{e}}
\def\E{\curly{E}}
\def\F{\mathbf{F}}
\def\C{\mathbf{C}}
\def\I{\curly{I}}
\def\N{\mathbf{N}}
\def\D{\curly{D}}
\def\Q{\mathbf{Q}}
\def\O{\curly{O}}
\def\V{\curly{V}}
\def\W{\curly{W}}
\def\Z{\mathbf{Z}}
\def\p{\tilde{p}}
\def\Pp{\curly{P}}
\def\pr{\mathfrak{p}}
\def\Proj{\mathbf{P}}
\def\q{\mathfrak{q}}
\def\Ss{\curly{S}}
\def\T{\curly{T}}
\def\Nm{\mathcal{N}}
\def\cont{\mathrm{cont}}
\def\ord{\mathrm{ord}}
\def\rad{\mathrm{rad}}
\def\lcm{\mathop{\mathrm{lcm}}}
\numberwithin{equation}{section}
\begin{abstract} Following Srinivasan, an integer $n\geq 1$ is called \emph{practical} if every natural number in $[1,n]$ can be written as a sum of distinct divisors of $n$. This motivates us to define $f(n)$ as the largest integer with the property that all of $1, 2, 3, \dots, f(n)$ can be written as a sum of distinct divisors of $n$. (Thus, $n$ is practical precisely when $f(n)\geq n$.) We think of $f(n)$ as measuring the ``practicality'' of $n$; large values of $f$ correspond to numbers $n$ which we term \emph{practical pretenders}. Our first theorem describes the distribution of these impostors: Uniformly for $4 \leq y \leq x$, 
\[ \#\{n\leq x: f(n)\geq y\} \asymp \frac{x}{\log{y}}. \]
This generalizes Saias's result that the count of practical numbers in $[1,x]$ is $\asymp \frac{x}{\log{x}}$.

Next, we investigate the maximal order of $f$ when restricted to non-practical inputs. Strengthening a theorem of Hausman and Shapiro, we show that every $n > 3$ for which
\[ f(n) \geq \sqrt{e^{\gamma} n\log\log{n}} \]
is a practical number.

Finally, we study the range of $f$. Call a number $m$ belonging to the range of $f$ an \emph{additive endpoint}. We show that for each fixed $A >0$ and $\epsilon > 0$, the number of additive endpoints in $[1,x]$ is eventually smaller than $x/(\log{x})^A$ but larger than $x^{1-\epsilon}$.
\end{abstract}
\maketitle

\section{Introduction}
In 1948, Srinivasan \cite{srinivasan48} initiated the study of \emph{practical numbers}, natural numbers $n$ with the property that each of $1, 2, 3, \dots, n-1$ admits an expression as a sum of distinct divisors of $n$. For example, every power of $2$ is practical (since every natural number admits a binary expansion), but there are many unrelated examples, such as $n=6$ or $n=150$. Srinivasan posed two problems: Classify all practical numbers and say something interesting about their distribution. 

The first of these tasks was carried to completion by Stewart \cite{stewart54} in 1954. The same classification was discovered independently, and almost concurrently, by Sierpi{\'n}ski \cite{sierpinski55}. Given a natural number $n$, write its canonical prime factorization in the form 
\begin{equation}\label{eq:nfact} n := p_1^{e_1} p_2^{e_2} \cdots p_r^{e_r},\quad\text{where}\quad p_1 < p_2 < \dots < p_r. \end{equation}
Put $n_0=1$, and for $1 \leq j \leq r$, put $n_j:=\prod_{i=1}^{j}p_i^{e_i}$. Then $n$ is practical if and only if
\begin{equation}\label{eq:ineqs} p_{j+1} \leq \sigma(n_j)+1 \quad \text{for all}\quad 0 \leq j < r. \end{equation}
Below, we refer to this as the \emph{Stewart--Sierpi\'nski classification} of practical numbers. This criterion implies, in particular, that all practical numbers $n > 1$ are even. 
Stewart and Sierpi\'nski also showed that if all of the inequalities \eqref{eq:ineqs} hold, then not only are all integers in $[1,n-1]$ expressible as a sum of distinct divisors of $n$, but the same holds for all integers in the longer interval $[1, \sigma(n)]$. Note that $[1,\sigma(n)]$ is the largest interval one could hope to represent, since the sum of all distinct divisors of $n$ is $\sigma(n)$.

The distribution of practical numbers has proved more recalcitrant. Let $PR(x)$ denote the count of practical numbers not exceeding $x$. Already in 1950, Erd\H{o}s \cite{erdos50} claimed he could show that the practical numbers have asymptotic density zero, i.e., that $PR(x)=o(x)$ as $x\to\infty$, but he gave no details. In 1984, Hausman and Shapiro \cite{HS84} made the more precise assertion that $PR(x) \leq x/(\log{x})^{\beta+o(1)}$, with $\beta = \frac{1}{2}(1-1/\log{2})^2 \approx 0.0979\ldots$. Their proof has an error (specifically, \cite[Lemma 3.2]{HS84} is incorrect); one should replace $\beta$ with the smaller exponent $1-\frac{1+\log\log{2}}{\log{2}} \approx 0.0860713$. Much sharper results on $PR(x)$ were soon established by Tenenbaum \cite{tenenbaum86, tenenbaum95P}, who proved that  $PR(x) = \frac{x}{\log{x}}(\log\log{x})^{O(1)}$. By a refinement of Tenenbaum's methods, Saias \cite{saias97} established in 1997 what is still the sharpest known result: There are absolute constants $c_1$ and $c_2$ with
\begin{equation}\label{eq:saias} c_1 \frac{x}{\log{x}} \leq PR(x) \leq c_2 \frac{x}{\log{x}} \quad\text{for all}\quad x\geq 2.\end{equation}
On the basis of the numerical data, Margenstern \cite{margenstern91} has conjectured that $\frac{PR(x)}{x/\log{x}}$ tends to a limit $\approx 1.341$. 

In this paper, we are concerned with what we term \emph{near-practical numbers} or \emph{practical pretenders}. Define $f(n)$ as the largest integer with the property that all of the numbers $1, 2, 3, \dots, f(n)$ can be written as a sum of distinct divisors of $n$. By definition, $n$ is practical precisely when $f(n) \geq n-1$. We define a \emph{near-practical} number as one for which $f(n)$ is ``large''. This definition is purposely vague; its nebulous nature suggests that we investigate the behavior of the two-parameter function 
\[ N(x,y):= \#\{n \leq x: f(n) \geq y\} \]
for all $x$ and $y$. Our first result gives the order of magnitude of the near-practical numbers for essentially all interesting choices of $x$ and $y$.

\begin{thm}\label{thm:uniformpractical} There are absolute positive constants $c_3$ and $c_4$ so that for $4 \leq y \leq x$, we have 
\[ c_3 \frac{x}{\log{y}} \leq N(x,y) \leq c_4 \frac{x}{\log{y}}. \]
\end{thm}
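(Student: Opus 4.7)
The plan is to reduce $N(x,y)$ to a weighted sum over practical numbers and then apply Saias's bounds \eqref{eq:saias} together with standard sieve estimates. First I would establish a structural formula for $f(n)$. Given $n$ as in \eqref{eq:nfact}, let $j^{*}=j^{*}(n)$ be the largest index $j$ for which \eqref{eq:ineqs} holds for all $0\le i<j$, and set $d(n):=n_{j^{*}}$ (so $d(n)=1$ if $n$ is odd). Applying the Stewart--Sierpi\'nski criterion to $d(n)$ itself, $d(n)$ is practical and every integer in $[1,\sigma(d(n))]$ is a sum of distinct divisors of $d(n)$, hence of $n$. Conversely, any such sum involving a prime from $\{p_{j^{*}+1},\dots,p_{r}\}$ is at least $p_{j^{*}+1}>\sigma(d(n))+1$. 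Hence $f(n)=\sigma(d(n))$, and writing $P^{-}(m)$ for the least prime factor of $m$ (with $P^{-}(1):=\infty$), the map $n\mapsto(d(n),n/d(n))$ is a bijection between $\mathbf{N}$ and pairs $(d,m)$ with $d$ practical, $m\ge 1$, and $P^{-}(m)>\sigma(d)+1$. Setting $\Phi(X,z):=\#\{m\le X:P^{-}(m)>z\}$, this yields the master formula
\[
N(x,y)=\sum_{\substack{d\le x,\ d\ \text{practical}\\ \sigma(d)\ge y}}\Phi\!\left(x/d,\ \sigma(d)+1\right).
\]

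For the upper bound I would plug in the standard sieve estimate $\Phi(X,z)\ll X/\log z$, uniform for $X\ge 1$ and $z\ge 2$ (which is Mertens' theorem for $z\le\sqrt{X}$ and trivial otherwise). Using $\log\sigma(d)\ge\log d$ and noting that $\sigma(d)\ge y$ forces $d\gg y/\log\log(y+2)$, partial summation against Saias's upper bound $PR(t)\ll t/\log t$ from \eqref{eq:saias} gives
\[
N(x,y)\ll x\sum_{\substack{d\ \text{pract.}\\ d\gg y/\log\log y}}\frac{1}{d\log d}\ll x\int_{y/\log\log y}^{x}\!\frac{dt}{t(\log t)^{2}}\ll \frac{x}{\log y}.
\]

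For the matching lower bound, when $y\le x^{1/3}$ I would restrict the sum to practical $d\in[y,x^{1/2-\epsilon}]$ for a small fixed $\epsilon>0$; in this range $\sigma(d)\ll d\log\log d$ is dominated by a small power of $x/d$, so Mertens supplies the reverse inequality $\Phi(x/d,\sigma(d)+1)\gg (x/d)/\log\sigma(d)$, and partial summation against Saias's lower bound recovers $N(x,y)\gg x/\log y$. For the complementary range $y>x^{1/3}$ we have $\log y\asymp\log x$, so it suffices to prove the single estimate $N(x,x)\gg x/\log x$. To establish this, I would exhibit $\gg x/\log x$ practical $n\in(x/2,x]$ with $\sigma(n)\ge x$; noting that any multiple of $6$ satisfies $\sigma(n)/n\ge(3/2)(4/3)=2$ automatically, it is enough to produce $\gg x/\log x$ practical multiples of $6$ in $(x/2,x]$, which is obtained by a mild modification of Saias's construction (forcing a leading factor of $6$ into the built-up divisor) or, equivalently, by a direct appeal to known estimates on practical numbers in arithmetic progressions.

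The hard part will be this edge lower bound in the regime $y$ close to $x$: the partial-summation integral $\int_{y}^{x}dt/(t\log^{2}t)$ becomes too short to produce $x/\log y$ from the density of practical numbers alone, so one must instead exploit the abundancy side of practical numbers directly to guarantee $\sigma(n)\ge y$ even when $n$ is forced to be smaller than $y$. Once this is handled, the remainder of the proof is a routine combination of Saias's estimate \eqref{eq:saias} with partial summation and the standard sieve bounds for $\Phi(X,z)$.
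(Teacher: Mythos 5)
Your overall architecture (decompose $n$ via its practical component, reduce $N(x,y)$ to a sum of sifted counts $\Phi(x/d,\sigma(d)+1)$ over practical $d$, then apply Saias's bounds \eqref{eq:saias} by partial summation) is exactly the paper's approach, and your upper bound and your lower bound in the range $y\le x^{1/3}$ are essentially sound. Two small caveats on the upper bound: the claim that $\Phi(X,z)\ll X/\log z$ holds uniformly for all $X\ge 1$, $z\ge 2$ is false when $z>X$ (then $\Phi(X,z)=1$ while $X/\log z$ may be tiny); those terms correspond precisely to $m=1$, i.e.\ to practical $n$ themselves, and must be set aside and bounded by $PR(x)\ll x/\log x$, which is harmless but needs saying. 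Also, partial summation for a \emph{lower} bound on $\sum 1/(d\log d)$ over practical $d$ in $[y,T]$ is not automatic from \eqref{eq:saias}, because the boundary term $-PR(y)/(y\log y)$ carries the constant $c_2$ while the main term carries $c_1$; one needs the range $[y,T]$ long enough (your choice $T=x^{1/2-\epsilon}$ with $y\le x^{1/3}$ does just barely work, but the verification is not ``routine'' and should be written out, and the case of bounded $y$ needs a separate remark, e.g.\ via the Hausman--Shapiro short-interval result, Lemma \ref{lem:veryshort}).

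The genuine gap is in the edge regime $y>x^{1/3}$, which you correctly identify as the hard part but then do not actually resolve. You need $\gg x/\log x$ practical numbers in $(x/2,x]$, and this does \emph{not} follow from \eqref{eq:saias}: since the constants $c_1<c_2$ are not known to satisfy $c_1>c_2/2$, the difference $PR(x)-PR(x/2)$ could a priori be negative, so ``a direct appeal to known estimates'' is unavailable, and ``a mild modification of Saias's construction'' is precisely the nontrivial step you would have to supply. The paper handles this with a dedicated lemma (Lemma \ref{lem:bertrand}): by pigeonhole, some interval $(\frac{x}{j+1},\frac{x}{j}]$ with $j$ bounded contains $\gg x/\log x$ practical numbers $n$, and Margenstern's observation (Lemma \ref{lem:margenstern}) that $jn$ is practical whenever $n$ is practical and $j\le \sigma(n)+1$ pushes these up into $((1-\epsilon)x,x]$. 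You should incorporate this (or an equivalent) argument. Separately, your detour through multiples of $6$ is unnecessary: for any practical $n$, the integer $n-1$ is a sum of \emph{proper} divisors of $n$, so $\sigma(n)-n\ge n-1$, i.e.\ $\sigma(n)\ge 2n-1\ge x$ for $n\ge (x+1)/2$; thus every practical number in $[(x+1)/2,x]$ is already counted by $N(x,x)$, and no statement about practical numbers in arithmetic progressions is needed.
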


\begin{rmk} To see why the technical restriction $y \geq 4$ is necessary, note that $N(x,x) = 0$ for all $3 < x < 4$. 
\end{rmk}

Theorem \ref{thm:uniformpractical} has the following easy corollary, proved in \S\ref{sec:showstopper}.

\begin{cor}\label{cor:fdensity} For each $m$, the set of natural numbers $n$ with $f(n)=m$ possesses an asymptotic density, say $\rho_m$. The constant $\rho_m$ is positive whenever there is at least one $n$ with $f(n)=m$. Moreover,  \[ \sum_{m=1}^{\infty}\rho_m=1.\]
\end{cor}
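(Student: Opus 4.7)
The plan is to identify $\{n : f(n) = m\}$ exactly via the Stewart--Sierpi\'nski criterion, and then read off density, positivity, and the total mass. Writing $n$ as in \eqref{eq:nfact}, let $k = k(n)$ be the largest index (with $k=r$ allowed) for which \eqref{eq:ineqs} holds for all $0 \le j < k$. Then $n_k$ is practical, so its divisors represent every integer in $[1,\sigma(n_k)]$; on the other hand every divisor of $n$ that does not divide $n_k$ is a multiple of $p_{k+1} > \sigma(n_k)+1$. Hence $f(n) = \sigma(n_k)$, which gives
\[ \{n : f(n) = m\} = \bigsqcup_{d \in \D_m} \E_d, \]
where $\D_m := \{d : d \text{ practical},\ \sigma(d) = m\}$ and $\E_d := \{n : d \mid n \text{ and every prime factor of } n/d \text{ exceeds } m+1\}$. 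The set $\D_m$ is finite because $d \le \sigma(d) = m$, and the union is disjoint since $d$ is recovered from $n$ as $n_{k(n)}$.

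Each $\E_d$ consists of $d$ times a positive integer free of prime factors $\le m+1$, so a textbook application of Mertens' theorem gives it asymptotic density $\tfrac{1}{d}\prod_{p \le m+1}(1-1/p)$. Summing over the finite set $\D_m$ both establishes the existence of $\rho_m$ and yields the explicit formula
\[ \rho_m = \Bigl(\prod_{p \le m+1}(1-1/p)\Bigr) \sum_{d \in \D_m} \frac{1}{d}, \]
which is positive precisely when $\D_m \ne \emptyset$, i.e.\ exactly when some $n$ realizes $f(n)=m$.

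For the final claim, put $D_m(x) := \#\{n \le x : f(n) = m\}$. Since every $n \le x$ lies in exactly one level set, $\sum_m D_m(x) = \lfloor x \rfloor$, and Fatou's lemma applied to counting measure in the variable $m$ at once yields $\sum_m \rho_m \le 1$. For the reverse inequality I would invoke the upper bound half of Theorem~\ref{thm:uniformpractical}: for any $M \ge 4$,
\[ \sum_{m \ge M} \frac{D_m(x)}{x} = \frac{N(x,M)}{x} \le \frac{c_4}{\log M}, \]
so $\sum_{m < M} D_m(x)/x \ge 1 - c_4/\log M - O(1/x)$. Sending $x \to \infty$ and then $M \to \infty$ gives $\sum_m \rho_m \ge 1$. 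The only step with real content is the identification of the level sets in the first paragraph; the Stewart--Sierpi\'nski criterion does all the work, and the remainder is routine bookkeeping, consistent with the corollary being advertised as ``easy''.
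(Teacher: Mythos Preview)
Your proof is correct. The route you take for the first two assertions differs from the paper's, however. The paper argues that whether $f(n)=m$ can be decided from the list of divisors of $n$ not exceeding $m+1$, hence depends only on the residue class of $n$ modulo $(m+1)!$; existence and positivity of $\rho_m$ then follow immediately from periodicity, and in fact $\rho_m$ is a rational number with denominator dividing $(m+1)!$. You instead invoke Lemma~\ref{lem:component} to write $\{n:f(n)=m\}$ explicitly as a disjoint union of sifted arithmetic progressions indexed by practical $d$ with $\sigma(d)=m$, which yields the closed formula $\rho_m=\bigl(\prod_{p\le m+1}(1-1/p)\bigr)\sum_{d\in\D_m}1/d$. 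Your approach is more informative---one can read off, for instance, that $\rho_1=\tfrac12$ and $\rho_3=\tfrac{1}{2}\cdot\tfrac{2}{3}\cdot\tfrac{1}{2}=\tfrac{1}{6}$---at the cost of relying on the structural lemma; the paper's periodicity argument is slicker but less explicit. For the identity $\sum_m\rho_m=1$ both proofs appeal to Theorem~\ref{thm:uniformpractical} in the same way (your Fatou step for $\sum_m\rho_m\le1$ is fine but unnecessary: the finite partial sums $\sum_{m<M}\rho_m=\lim_{x\to\infty}\sum_{m<M}D_m(x)/x\le1$ already give this).
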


We call a natural number $m$ for which $\rho_m$ is nonvanishing (equivalently, an $m$ in the image of $f$) an \emph{additive endpoint}. Thus, Corollary \ref{cor:fdensity} shows that $\rho_m$ is the probability mass function for additive endpoints. The first several additive endpoints are
\[ 1, 3, 7, 12, 15, 28, 31, 39, 42, 56, 60, 63, 73, 90, 91, 96, 100, 104, 108, 112, 120, \dots. \]
Just from this limited data, one might conjecture that $\rho_m$ is usually zero, i.e., zero apart from of a set of $m$ of vanishing asymptotic density. This guess is confirmed, in a much sharper form, in our next theorem.

\begin{thm}\label{thm:showstopper} For each fixed $A> 0$ and all $x\geq 3$, the number of integers in $[1,x]$ which occur as additive endpoints is $\ll_{A} x/(\log{x})^A$. In the opposite direction, the number of additive endpoints up to $x$ exceeds
\[ x/\exp(c_5 (\log\log{x})^3) \]
for all large $x$, for some absolute constant $c_5 > 0$.
\end{thm}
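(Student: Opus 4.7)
My plan is to first reduce the theorem to a statement about $\sigma$ on the practical numbers, and then treat the two bounds separately.

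\medskip

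\noindent\emph{Reduction.} Given $n$ with prime factorization as in \eqref{eq:nfact}, let $j^{\ast}$ be the largest index for which \eqref{eq:ineqs} holds for all $i < j^{\ast}$. Then $n_{j^{\ast}}$ is practical by the Stewart--Sierpi\'nski criterion, so every integer in $[1,\sigma(n_{j^{\ast}})]$ is a sum of distinct divisors of $n_{j^{\ast}}$, hence of $n$; on the other hand, any sum of distinct divisors of $n$ that uses a divisor divisible by $p_{j^{\ast}+1}$ already exceeds $\sigma(n_{j^{\ast}})+1$. Hence $f(n)=\sigma(n_{j^{\ast}})$, and specializing to $n$ itself practical shows every $\sigma(\text{practical})$ is attained. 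Thus the additive endpoints are exactly $\{\sigma(n^{\ast}):n^{\ast}\text{ practical}\}$ (including $\sigma(1)=1$).

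\medskip

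\noindent\emph{Upper bound.} Setting $V(x):=\#\{m\leq x:m=\sigma(n^{\ast})\text{ for some practical }n^{\ast}\}$, I want $V(x)\ll_{A}x/(\log x)^{A}$. Since \eqref{eq:saias} only gives $V(x)\leq\#\mathrm{Practicals}\cap[1,x]\ll x/\log x$, an additional factor $(\log x)^{A-1}$ must be gained from the non-injectivity of $\sigma$ on the practicals. The plan is to show that for almost all practicals $n^{\ast}\leq x$, there are at least $(\log x)^{A-1}$ sibling practicals $n'$ with $\sigma(n')=\sigma(n^{\ast})$; double counting then gives the bound. Siblings arise from re-factoring the top of the Stewart--Sierpi\'nski chain: if $n^{\ast}=Mp_{1}\cdots p_{k}$ is practical with $k$ distinct primes on top, then any factorization $\prod_{i}(p_{i}+1)=\prod_{i}(p'_{i}+1)$ with $p'_{i}$ admissible primes yields a sibling, and the number of such factorizations is controlled by the generalized divisor function $d_{k}(\prod_{i}(p_{i}+1))$. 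Taking $k$ a slowly growing function of $x$ and invoking a sieve on shifted primes $p+1$ (Bombieri--Vinogradov in arithmetic progressions) should produce the required $(\log x)^{A-1}$ savings; isolating the exceptional practicals for which this argument fails is the principal technical obstacle.

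\medskip

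\noindent\emph{Lower bound.} I need at least $x/\exp(c_{5}(\log\log x)^{3})$ distinct $\sigma(n^{\ast})\leq x$ with $n^{\ast}$ practical. By \eqref{eq:saias} applied at $y=x/(C\log\log x)$, together with Gr\"onwall's bound $\sigma(n)\leq e^{\gamma}n\log\log n+O(n)$, there are $\gg x/(\log x\log\log x)$ such $n^{\ast}$. I then want to bound the maximum fiber $\#\{n^{\ast}\text{ practical}:\sigma(n^{\ast})=m\}\ll\exp(c(\log\log m)^{3})$ by a recursive count of the admissible Stewart--Sierpi\'nski chains realizing a prescribed $\sigma$-value, in the spirit of the classical Erd\H{o}s--Pomerance bounds on $\sigma^{-1}$ but sharpened by the ordered-increasing-primes constraint of the chain. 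Dividing the count of practicals by this maximum fiber size gives the claim, with the extra factor $\log x\log\log x$ absorbed into a slightly larger $c_{5}$.
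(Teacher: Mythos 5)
Your reduction of the theorem to estimating $\#\{\sigma(n): n \text{ practical}\}$ is correct and is exactly the paper's starting point (Lemma \ref{lem:component}), but both halves of your plan have genuine gaps. For the upper bound, the mechanism you propose is wrong in kind, not merely incomplete. You want to gain the factor $(\log x)^{A-1}$ from non-injectivity of $\sigma$, by producing, for almost every practical $n\leq x$, at least $(\log x)^{A-1}$ practical siblings $n'$ with $\sigma(n')=\sigma(n)$ via refactorizations $\prod_i(p_i+1)=\prod_i(p_i'+1)$. Producing even one nontrivial refactorization for a typical $n$ requires primes $p'$ with $p'+1$ having a prescribed multiplicative structure; that is a hard problem about shifted primes which Bombieri--Vinogradov does not address, and you need not one sibling but an arbitrarily large power of $\log x$ of them, uniformly in $A$. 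The paper's route is entirely different and much softer: a practical $n>x^{3/4}$ satisfies $2^{d(n)}\geq n$, hence $d(n)>\log x$, hence by AM--GM either $\Omega(n)>2A\log\log x$ (rare, by Lemma \ref{lem:HT}) or $\omega(n)\gg\log\log x$; in the latter case, after discarding a negligible set of $n$, many primes $p\parallel n$ have $\Omega(p+1)$ large, forcing $\Omega(\sigma(n))>2A\log\log x$. Thus the image lies in the thin set of $m$ with abnormally many prime factors, and Lemma \ref{lem:HT} applied to $m$ finishes the proof. No multiplicity of $\sigma$ is needed.

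For the lower bound, your count of practicals up to $x/(C\log\log x)$ is fine, but the pointwise fiber bound $\#\{n \text{ practical}:\sigma(n)=m\}\ll\exp(c(\log\log m)^3)$ by which you propose to divide is false. The paper's remark following Proposition \ref{prop:thickness} makes precisely this point: a single value $m$ can conjecturally have $x^{1-\epsilon}$ preimages, and the Erd\H{o}s construction (a practical seed such as a large power of $2$ times products of primes $p$ with $p+1$ smooth, all of which remain practical by the Stewart--Sierpi\'nski criterion) unconditionally yields fibers of size a positive power of $x$ even after restricting to practical preimages; the increasing-primes constraint does not prevent this. What is true, and what Proposition \ref{prop:thickness} actually asserts, is a bound on the \emph{total} preimage count of any sufficiently sparse set $\Ss$: one first removes the $m$ that are small, have $\Omega(m)\geq 8A^2(\log\log x)^2$, or are divisible by the square of a large prime --- bounding the preimages of these via statistics of $n$ rather than of $m$ (Lemmas \ref{lem:sigmaOmega} and \ref{lem:sigmaRepeated}) --- and only then applies a per-element bound of $\exp(O((\log\log x)^3))$ to the remaining $m$ via Lemma \ref{lem:sigmadivides}. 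Your argument needs this exceptional-set surgery; without it, the division step is invalid.
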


Above, we noted Stewart's result that if $f(n) \geq n-1$, then $f(n)=\sigma(n)$. In this statement, a weak lower estimate on $f(n)$ implies that $f(n)$ is as large as possible. Hausman and Shapiro \cite{HS84} proposed investigating the extent of this curious phenomenon. More specifically, they asked for the slowest-growing monotone function $g(n)$ for which $f(n)\geq g(n)$ implies (at least for $n$ large) that $n$ is practical. Set
\[ HS(n):=\sqrt{e^{\gamma} n\log\log{n}}, \]
where $\gamma$ is the Euler--Mascheroni constant. The next proposition appears as \cite[Theorems 2.1, 2.2]{HS84}.

\begin{prop}\label{prop:HS} Let $\epsilon > 0$. Apart from finitely many exceptional $n$, all solutions to $f(n) \geq (1+\epsilon) HS(n)$ are practical. On the other hand, there are infinitely many non-practical $n$ with $f(n) \geq (1-\epsilon)HS(n)$.\end{prop}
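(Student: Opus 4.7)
The plan is to tackle the two halves of the proposition separately: the upper bound via a direct analysis of non-practical $n$ through the Stewart--Sierpi\'nski classification, and the lower bound via an explicit construction obtained by gluing a prime onto a colossally abundant number.

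For the upper bound, suppose $n$ is not practical and write $n=p_1^{e_1}\cdots p_r^{e_r}$ with $p_1<\cdots<p_r$. Let $j$ be the smallest index for which \eqref{eq:ineqs} fails, i.e., $p_{j+1}>\sigma(n_j)+1$. I first pin down $f(n)$ exactly: by minimality of $j$, $n_j$ is practical, so every integer in $[1,\sigma(n_j)]$ is representable using divisors of $n_j$ (hence of $n$), while $\sigma(n_j)+1$ is not, since every divisor of $n$ either divides $n_j$ or is at least $p_{j+1}>\sigma(n_j)+1$. Thus $f(n)=\sigma(n_j)$. The failure of \eqref{eq:ineqs} at stage $j$ then yields $n\geq n_j p_{j+1}>n_j\sigma(n_j)$, so
\[ \sigma(n_j)^2 < n\cdot\frac{\sigma(n_j)}{n_j}. \]
Invoking Gr\"onwall's estimate $\sigma(m)/m\leq(1+o(1))e^{\gamma}\log\log m$ as $m\to\infty$, and noting we may assume $n_j\to\infty$ (if $n_j$ remains bounded then $f(n)$ does too, while $HS(n)\to\infty$), we conclude $f(n)^{2}<(1+o(1))e^{\gamma}n\log\log n$, i.e., $f(n)<(1+\epsilon)HS(n)$ for all $n$ sufficiently large.

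For the lower bound, I produce infinitely many non-practical $n=Np$ with the desired property. Here $N$ ranges over a sequence of practical numbers nearly saturating Gr\"onwall's limsup, and $p$ is a prime barely exceeding $\sigma(N)+1$. The colossally abundant numbers $N=2^{a_1}3^{a_2}\cdots p_k^{a_k}$ (with $a_1\geq\cdots\geq a_k\geq 1$) supply the needed sequence: a routine induction using Bertrand's postulate verifies that they satisfy \eqref{eq:ineqs} and so are practical, and a theorem of Gr\"onwall--Ramanujan gives $\sigma(N)/N\geq(1-\epsilon/3)e^{\gamma}\log\log N$ for $N$ large along this family. Take $p$ to be the least prime exceeding $\sigma(N)+1$, so $p=\sigma(N)(1+o(1))$ by the prime number theorem and in particular $p\nmid N$. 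Then $n:=Np$ is not practical and $f(n)=\sigma(N)$ by the same analysis as above. A quick bookkeeping of orders of magnitude gives $n\sim N\sigma(N)\sim e^{\gamma}N^{2}\log\log N$, hence $\log\log n\sim\log\log N$, and so
\[ HS(n)^{2}=e^{\gamma}n\log\log n\sim e^{2\gamma}N^{2}(\log\log N)^{2}, \]
while $f(n)^{2}=\sigma(N)^{2}\geq(1-\epsilon/3)^{2}e^{2\gamma}N^{2}(\log\log N)^{2}$. Hence $f(n)\geq(1-\epsilon)HS(n)$ for all sufficiently large $N$ in the sequence, producing infinitely many witnesses.

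The main obstacle is the existence of a practical sequence realizing Gr\"onwall's extremal rate: primorials alone fall short by a factor of $6/\pi^{2}$ (from $\prod_{p\leq y}(1+1/p)$ versus $\prod_{p\leq y}(1-1/p)^{-1}$), so one is forced to use higher prime powers. Colossally abundant numbers are the standard remedy, and checking their practicality is painless thanks to the form of their factorization. The remainder is asymptotic bookkeeping to ensure that the $\log\log n$ term does not degrade after adjoining the prime $p\sim\sigma(N)$.
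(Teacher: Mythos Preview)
The paper does not prove Proposition~\ref{prop:HS}; it is quoted verbatim from Hausman and Shapiro \cite[Theorems 2.1, 2.2]{HS84}, so there is no in-paper proof to compare against. Your argument is nonetheless correct on both halves.

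For the first half, your approach coincides with the skeleton of the paper's proof of the sharper Theorem~\ref{thm:HS}: identify $f(n)=\sigma(n_j)$ (this is exactly Lemma~\ref{lem:component}), use the failure of \eqref{eq:ineqs} to get $n>n_j\sigma(n_j)$, and then bound $\sigma(n_j)/n_j$. The only difference is precision: you invoke Gr\"onwall's asymptotic (Lemma~\ref{lem:eulerphi}), which is exactly enough for the $(1+\epsilon)$ statement, whereas the paper uses Robin's explicit inequality (Lemma~\ref{lem:robin}) to remove the~$\epsilon$. One small wording point: rather than ``assume $n_j\to\infty$,'' it is cleaner to say that for each $\delta>0$ there are only finitely many $m$ with $\sigma(m)/m>(1+\delta)e^{\gamma}\log\log m$, and hence only finitely many possible values of $n_j$ violating that bound; since each such $n_j$ gives $f(n)=\sigma(n_j)$ bounded while $HS(n)\to\infty$, these contribute only finitely many exceptions.

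For the second half (which the paper does not address, since Theorem~\ref{thm:HS} strengthens only the first assertion), your construction is the natural one and the details are in order. Colossally abundant $N$ have the stated exponent pattern, so $N_j\geq p_j\#$ for each $j$, and the practicality of primorials (via Bertrand) together with the monotonicity of $\sigma$ gives the practicality of $N$. The choice of the least prime $p>\sigma(N)+1$ forces $p>N$, hence $p\nmid N$, and the practical component of $n=Np$ is exactly $N$, so $f(n)=\sigma(N)$. Your asymptotic bookkeeping is correct: $p=(1+o(1))\sigma(N)$ by the prime number theorem, $\log\log n=(1+o(1))\log\log N$, and the comparison $f(n)^2/HS(n)^2=\sigma(N)/(e^{\gamma}Np\log\log n\,/\,\sigma(N))\to 1$ goes through with the slack you built in via~$\epsilon/3$.
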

	
Our final theorem removes the factor $1+\epsilon$ from the first half of Proposition \ref{prop:HS}.

\begin{thm}\label{thm:HS} If $n > 3$ and $f(n) \geq HS(n)$, then $n$ is practical.
\end{thm}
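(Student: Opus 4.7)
The plan is to argue the contrapositive via the Stewart--Sierpi\'nski classification paired with a Robin-type upper bound for $\sigma(m)/m$. Suppose $n>3$ is not practical, and let $j$ be the smallest index (necessarily with $0\le j<r$) for which $p_{j+1}\ge\sigma(n_j)+2$; by the minimality of $j$, the prefix $n_j$ is itself practical. Any divisor of $n$ which does not already divide $n_j$ must be a multiple of some $p_i$ with $i>j$, hence is at least $p_{j+1}>\sigma(n_j)$. Consequently every sum of distinct divisors of $n$ lies either in $[0,\sigma(n_j)]$ (using only divisors of $n_j$) or in $[p_{j+1},\infty)\subseteq[\sigma(n_j)+2,\infty)$, so $\sigma(n_j)+1$ is unreachable; since $n_j$ is practical every integer in $[0,\sigma(n_j)]$ \emph{is} reachable. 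Hence $f(n)=\sigma(n_j)$, and the theorem reduces to the inequality
\[ \sigma(n_j)^{2}<e^{\gamma}n\log\log n. \]

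Using $n\ge n_jp_{j+1}\ge n_j(\sigma(n_j)+2)$, one has
\[ \frac{\sigma(n_j)^{2}}{n}\le\frac{\sigma(n_j)}{n_j}\cdot\frac{\sigma(n_j)}{\sigma(n_j)+2}<\frac{\sigma(n_j)}{n_j}, \]
so it suffices to prove the cleaner bound $\sigma(n_j)/n_j\le e^{\gamma}\log\log n$. The crucial source of slack is that $\sigma(n_j)\ge n_j$ already forces $n\ge n_j^{2}$, and hence
\[ \log\log n\ge\log\log n_j+\log 2. \]
I would play this $\log 2$ of head-room against Robin's unconditional estimate $\sigma(m)/m\le e^{\gamma}\log\log m+0.6483/\log\log m$ (valid for every $m\ge 3$). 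Since $e^{\gamma}\log 2\approx 1.234$ strictly exceeds $0.6483/\log\log n_j$ whenever $\log\log n_j>0.525$, that is, whenever $n_j\ge 6$, combining the two estimates yields $\sigma(n_j)/n_j<e^{\gamma}\log\log n$ in this range, and we are done.

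The remaining and main obstacle is the short list of practical values $n_j$ below $6$, namely $n_j\in\{1,2,4\}$, for which Robin's inequality provides too little margin. These I would settle by direct computation: for $n_j\in\{2,4\}$ the condition $p_{j+1}\ge\sigma(n_j)+2$ pins down an explicit minimum for $n$ (giving $n\ge 10$ and $n\ge 44$, respectively), and one verifies $\sigma(n_j)<HS(n_{\min})$ by a single evaluation, after which monotonicity of $HS$ on $[3,\infty)$ handles every larger admissible $n$. The case $n_j=1$ corresponds to $n$ being odd; here one sees precisely why the hypothesis $n>3$ is needed, since the sole obstruction $n=3$, for which $f(3)=1>HS(3)\approx 0.71$, is excluded by assumption, while every odd $n\ge 5$ satisfies $HS(n)>1=f(n)$.
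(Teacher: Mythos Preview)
Your argument is correct and is essentially the paper's own proof: both identify the practical component $d=n_j$, use $f(n)=\sigma(d)$, exploit $n>d^{2}$ to gain the $\log 2$ of slack in $\log\log n$, and finish with Robin's inequality $\sigma(d)/d\le e^{\gamma}\log\log d+0.6483/\log\log d$ for $d\ge 6$. The only cosmetic differences are that the paper frames it as a contradiction rather than a contrapositive, observes (unnecessarily) that $q=n/d$ is prime, and organizes the small cases as ``$n\le 14$'' rather than ``$n_j\in\{1,2,4\}$''; your case split is in fact slightly cleaner.
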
 

\subsection*{Notation} We use the Landau--Bachmann $o$ and $O$ symbols, as well as Vinogradov's $\ll$ notation, with their usual meanings; subscripts indicate dependence of implied constants. We write $\omega(n):=\sum_{p \mid n}1$ for the number of distinct prime factors of $n$ and $\Omega(n):=\sum_{p^k \mid n} 1$ for the number of prime factors of $n$ counted with multiplicity; $\Omega(n; y) := \sum_{p^k \mid n,~p \leq y} 1$ denotes the number of prime divisors of $n$ not exceeding $y$, again counted with multiplicity. The number of divisors of $n$ is denoted $d(n)$. We use $P^{-}(m)$ for the smallest prime factor of $m$, with the convention that $P^{-1}(1)$ is infinite. Absolute positive constants are denoted by $c_1, c_2, c_3,$ etc., and have the same meaning each time they appear.

\section{Proofs of Theorem \ref{thm:uniformpractical} and Corollary \ref{cor:fdensity}}
We begin by recording some useful lemmas. Our first gives a formula for $f(n)$ in terms of the prime factorization of $n$. 

We assume that the factorization of $n$ has been given in the form \eqref{eq:nfact}. We define $n_0:=1$ and $n_j:=\prod_{1 \leq i \leq j}p_i^{e_i}$. Let $0 \leq j < r$ be the first index for which $p_{j+1} > \sigma(n_j)+1$, putting $j=r$ if no such index exists (i.e., if $n$ is practical). Then $n_j$ is a practical number, by the Stewart--Sierpi\'nski classification, and we call $n_j$ the \emph{practical component} of $n$.
	
\begin{lem}\label{lem:component} We have $f(n)=\sigma(n_j)$, where $n_j$ is the practical component of $n$.
\end{lem}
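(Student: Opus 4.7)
The plan is to establish both directions of the equality $f(n) = \sigma(n_j)$ by first using the Stewart--Sierpi\'nski classification to realize every integer up through $\sigma(n_j)$, and then exploiting the ``gap'' in the divisor set of $n$ that appears beyond $\sigma(n_j)$.

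For the lower bound $f(n) \geq \sigma(n_j)$, I would invoke the Stewart--Sierpi\'nski classification directly: by minimality of $j$, the inequalities \eqref{eq:ineqs} hold for $n_j$, so $n_j$ is practical. The strengthened form of their theorem (cited in the introduction) says that every integer in $[1, \sigma(n_j)]$ is a sum of distinct divisors of $n_j$. Since every divisor of $n_j$ is also a divisor of $n$, this immediately gives $f(n) \geq \sigma(n_j)$. If $j = r$, we are done because $\sigma(n_j) = \sigma(n)$ is trivially an upper bound for $f(n)$ (the sum of all distinct divisors of $n$ equals $\sigma(n)$).

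For the upper bound when $j < r$, I would show that $\sigma(n_j)+1$ is not expressible as a sum of distinct divisors of $n$, by partitioning the divisors of $n$ into two classes. Every divisor $d$ of $n$ either divides $n_j$, in which case $d \leq n_j \leq \sigma(n_j)$, or else $d$ is divisible by some $p_i$ with $i > j$, in which case
\[ d \geq p_{j+1} > \sigma(n_j)+1, \]
where the second inequality is the defining property of the index $j$. Thus there is no divisor of $n$ in the interval $[\sigma(n_j)+1,\, p_{j+1}-1]$, and every divisor of $n$ strictly exceeding $\sigma(n_j)$ in fact strictly exceeds $\sigma(n_j)+1$.

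Now suppose for contradiction that $\sigma(n_j)+1 = d_1 + d_2 + \cdots + d_t$, a sum of distinct divisors of $n$. If some $d_k$ does not divide $n_j$, then $d_k > \sigma(n_j)+1$, and since the remaining $d_i$ are positive the total sum exceeds $\sigma(n_j)+1$, a contradiction. Otherwise every $d_k$ divides $n_j$, in which case
\[ \sigma(n_j)+1 = \sum_{k=1}^{t} d_k \leq \sum_{d \mid n_j} d = \sigma(n_j), \]
again a contradiction. Hence $f(n) \leq \sigma(n_j)$, completing the proof. There is no real obstacle here; the only point requiring care is to make sure the argument covers the practical case $j = r$ separately (handled by the trivial bound $f(n) \leq \sigma(n)$), since then $p_{j+1}$ is not defined.
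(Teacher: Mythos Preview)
Your proof is correct and follows essentially the same approach as the paper's: both arguments use that $n_j$ is practical to get $f(n)\ge\sigma(n_j)$, and both show $\sigma(n_j)+1$ is not representable by observing that any divisor of $n$ used in such a representation must be bounded by $\sigma(n_j)+1<p_{j+1}$ and hence must divide $n_j$. You are slightly more explicit than the paper in separating out the practical case $j=r$, which the paper handles implicitly (there is no $p_{j+1}$ to worry about, and $f(n)=\sigma(n)$ holds for practical $n$).
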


\begin{proof}  Since $n_j$ is practical, $f(n) \geq f(n_j) = \sigma(n_j)$. On the other hand, $\sigma(n_j)+1$ is not representable as a sum of proper divisors of $n$. Indeed, if $d$ is a divisor of $n$ involved in an additive representation of $\sigma(n_j)+1$, then $d \leq \sigma(n_j)+1 < p_{j+1} < p_{j+2} < \dots < p_r$. It follows that the only primes dividing $d$ are $p_1, \dots, p_j$, so that $d$ is a divisor of $n_j$. But the largest number which can be formed as a sum of distinct divisors of $n_j$ is $\sigma(n_j)$, which is smaller than $\sigma(n_j)+1$. So $\sigma(n_j)+1$ is not representable as a sum of distinct divisors of $n$, and hence $f(n) = \sigma(n_j)$, as claimed.
\end{proof}

The following lemma was observed by Margenstern \cite[Corollaire 1]{margenstern91} to follow from the Stewart--Sierpi\'nski classification.

\begin{lem}\label{lem:margenstern} If $n$ is practical and $m \leq \sigma(n)+1$, then $mn$ is practical.\end{lem}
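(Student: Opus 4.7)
The plan is to reduce to the case where $m$ is a prime and then invoke the Stewart--Sierpi\'nski classification (\ref{eq:ineqs}) directly.

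For the reduction, I induct on $\Omega(m)$. The base case $m=1$ is immediate. For the inductive step, pick a prime $p \mid m$ and write $m = pm'$. Since $m' < m \leq \sigma(n)+1$, the inductive hypothesis applied to $m'$ gives that $m'n$ is practical. Since $n \mid m'n$, we also have $\sigma(m'n) \geq \sigma(n)$, so $p \leq m \leq \sigma(n)+1 \leq \sigma(m'n)+1$. Thus applying the prime case of the lemma to $m'n$ and $p$ yields that $pm'n = mn$ is practical.

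For the prime case, suppose $p \leq \sigma(n)+1$ is prime and write $n = p_1^{e_1}\cdots p_r^{e_r}$ with $p_1 < \cdots < p_r$, so that $n_j := \prod_{i \leq j}p_i^{e_i}$ satisfies (\ref{eq:ineqs}). I split into three cases based on the position of $p$: (i) $p = p_l$ for some $l$, (ii) $p > p_r$, or (iii) $p_l < p < p_{l+1}$ for some $1 \leq l < r$. (The case $p < p_1$ is impossible because any practical $n>1$ has $p_1 = 2$; the degenerate case $n=1$ is trivial.) In each case I write out the prime factorization of $pn$ in increasing order and verify (\ref{eq:ineqs}) for $pn$ by splitting on the index. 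Inequalities at indices below where $p$ sits are inherited verbatim from the corresponding inequalities for $n$; the inequality at the index where $p$ is inserted reduces either to the hypothesis $p \leq \sigma(n)+1$ (in case (ii)) or to $p < p_{l+1} \leq \sigma(n_l)+1$ (in case (iii)); and inequalities at higher indices follow from the monotonicity $\sigma((pn)_j) \geq \sigma(n_j)$, which holds because $n_j \mid (pn)_j$.

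The only real subtlety is organizing the indexing in case (iii) of the prime step; each individual inequality check is immediate from (\ref{eq:ineqs}) for $n$ and the hypothesis on $p$.
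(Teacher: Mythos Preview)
Your proof is correct. The paper does not actually supply a proof of this lemma; it merely cites Margenstern \cite{margenstern91} and remarks that the result follows from the Stewart--Sierpi\'nski classification, which is precisely the route you take (reduce to a single prime factor and verify the chain of inequalities \eqref{eq:ineqs} case by case).
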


We now employ Lemma \ref{lem:margenstern} to show that reasonably short intervals contain a positive proportion of practical numbers.

\begin{lem}\label{lem:bertrand} Let $\epsilon > 0$. For $x > x_0(\epsilon)$, the number of practical numbers in $((1-\epsilon)x, x]$ is $\gg_{\epsilon} x/\log{x}$. 
\end{lem}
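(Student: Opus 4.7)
The plan is to combine Saias's bound \eqref{eq:saias} with Lemma \ref{lem:margenstern}. The idea is to construct many practical numbers in $((1-\epsilon)x, x]$ as products $kn$, where $n$ is a smaller practical number and $k$ is a positive integer chosen so that $kn$ lands in the target interval.

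Fix $\epsilon > 0$ and choose a positive integer $K = K(\epsilon)$ large enough that $c_1 \epsilon - c_2/K > 0$ (for instance, $K = \lceil 2c_2/(c_1\epsilon)\rceil$). Applying the upper bound of \eqref{eq:saias} at $x/K$ and the lower bound at $\epsilon x$, we find that for $x$ sufficiently large,
\[
PR(\epsilon x) - PR(x/K) \geq (c_1 \epsilon - c_2/K)\, \frac{x}{\log x} + O\!\left(\frac{x}{(\log x)^2}\right) \gg_\epsilon \frac{x}{\log x}.
\]
Now, for each practical $n \in (x/K, \epsilon x]$, set $k_n := \lfloor(1-\epsilon)x/n\rfloor + 1$ and $N_n := k_n n$. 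Then $N_n \in ((1-\epsilon)x, (1-\epsilon)x + n] \subseteq ((1-\epsilon)x, x]$, using $n \leq \epsilon x$; moreover $k_n \leq K$, while $\sigma(n)+1 \geq n+1 > K$ for $x > K^2$. By Lemma \ref{lem:margenstern}, $N_n$ is practical.

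It remains to show that the map $n \mapsto N_n$ hits $\gg_\epsilon x/\log x$ distinct values in $((1-\epsilon)x, x]$. The main obstacle is controlling its multiplicity: if $n_1 \neq n_2$ both satisfy $N_{n_1} = N_{n_2} = N$, then $\lcm(n_1, n_2) \mid N$ and $N$ lies in an interval of length $\min(n_1, n_2) < \lcm(n_1, n_2)$, so at most one $N$ can arise per such pair. I would estimate the total number of coincident pairs by exploiting that the domain $(x/K, \epsilon x]$ has bounded multiplicative width $\epsilon K = O_\epsilon(1)$, so that for a typical $n_1$ there are only $O_\epsilon(1)$ potential partners $n_2$; a Cauchy--Schwarz argument of the form $\#\{N_n\} \geq (\sum_n 1)^2/\sum_N r(N)^2$ then yields $\gg_\epsilon x/\log x$ distinct values of $N_n$, completing the proof.
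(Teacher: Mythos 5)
Your construction is essentially the paper's: both proofs produce practical numbers in $((1-\epsilon)x,x]$ by taking a practical $n$ from a lower dyadic-type range (guaranteed in quantity by \eqref{eq:saias}) and multiplying by a bounded integer, with Lemma \ref{lem:margenstern} certifying that the product is still practical. Your argument is correct, but the final step you leave as a sketch has a one-line resolution that needs no Cauchy--Schwarz and no appeal to ``typical'' $n_1$: if $N=N_n$, then $n=N/k_n$ with $k_n$ an integer in $\{1,\dots,K\}$, so every value $N$ in the image has at most $K$ preimages, whence the number of distinct values is at least $K^{-1}\#\{n\} \gg_\epsilon x/\log x$. (This is exactly the ``bounded multiplicative width'' observation you gesture at; the point is that it holds for \emph{every} $N$, not just typically, because the cofactor $N/n$ is a positive integer at most $K$.) The paper avoids the multiplicity issue altogether by a pigeonhole step: it first finds a single interval $(\frac{x}{j+1},\frac{x}{j}]$ containing $\gg_\epsilon x/\log x$ practical numbers and then uses the \emph{fixed} multiplier $j$ for all of them, so that $n\mapsto jn$ is injective outright; this costs a factor of $\epsilon$ in the implied constant (yielding $\gg \epsilon^2 x/\log x$), whereas your varying multiplier, once the multiplicity bound is stated cleanly, gives the same conclusion with marginally less loss.
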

\begin{proof} We can assume that $0 < \epsilon < 1$. With $c_1$ and $c_2$ as defined in \eqref{eq:saias}, we set $r := \lceil 2c_2/c_1\rceil$ and $s:= \lceil 1/\epsilon\rceil$. From \eqref{eq:saias}, we have that for large $x$ (depending on $\epsilon$),  the number of practical numbers in the interval $(x/rs, x/s]$ is 
\[ \geq c_1 \frac{x/s}{\log{(x/s)}} - c_2 \frac{x/rs}{\log{(x/rs)}} > \frac{c_1}{3s} \frac{x}{\log{x}} \geq \frac{c_1}{6} \epsilon \frac{x}{\log{x}}. \]
By the pigeonhole principle, one of the intervals $(\frac{x}{s+1}, \frac{x}{s}]$, $(\frac{x}{s+2}, \frac{x}{s+1}]$, \dots, $(\frac{x}{rs}, \frac{x}{rs-1}]$ contains $> \frac{c_1}{6rs} \epsilon x/\log{x} \gg \epsilon^2 x/\log{x}$ practical numbers. Suppose this interval is $(\frac{x}{j+1}, \frac{x}{j}]$, where $s \leq j < rs$, and let $n$ be a practical number contained within. If $x > (rs)^2$, then $j < \frac{x}{j+1} < n$, and so $jn$ is practical by Lemma \ref{lem:margenstern}. (Note that the lower bound on $x$ assumed here depends only on $\epsilon$.) Letting $n$ run through the practical numbers in $(\frac{x}{j+1}, \frac{x}{j}]$, we obtain $\gg \epsilon^2 x/\log{x}$ practical numbers $jn \in (x\frac{j}{j+1},x]$. But $(x\frac{j}{j+1}, x] \subset ((1-\epsilon)x, x]$, by our choice of $s$. This proves Lemma \ref{lem:bertrand}. Moreover, we have shown that the implied constant in the lemma statement may be taken proportional to $\epsilon^2$.
\end{proof}

The next result, due to Hausman and Shapiro \cite[Theorem 4.1]{HS84}, shows that substantially shorter intervals than those considered in Lemma \ref{lem:bertrand} always contain at least one practical number.

\begin{lem}\label{lem:veryshort} For all real $x\geq 1$, there is a practical number $x < n < x + 2x^{1/2}$. 
\end{lem}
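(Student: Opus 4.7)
The plan is to apply Lemma \ref{lem:margenstern} using powers of $2$ as our seeds. If $n_0 = 2^k$, then $n_0$ is practical and $\sigma(n_0) + 1 = 2 n_0$, so Margenstern guarantees that every multiple $m n_0$ with $1 \leq m \leq 2 n_0$ is again practical. This already yields dense arithmetic progressions of practical numbers; the game is to pick the seed $n_0$ to tune the spacing to roughly $\sqrt{x}$.

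Given $x \geq 1$, I would take $n_0$ to be the smallest power of $2$ with $n_0 \geq \sqrt{x}$, so that $\sqrt{x} \leq n_0 < 2\sqrt{x}$. Let $n = m n_0$ be the smallest multiple of $n_0$ strictly exceeding $x$. Since consecutive multiples of $n_0$ are spaced by $n_0 < 2\sqrt{x}$, we automatically get $x < n < x + 2\sqrt{x}$. It remains to verify that $m$ lies within the range $m \leq 2 n_0$ where Margenstern applies; but $m \leq x/n_0 + 1 \leq \sqrt{x} + 1 \leq 2\sqrt{x} \leq 2 n_0$ for $x \geq 1$, so Lemma \ref{lem:margenstern} gives the practicality of $n$.

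I do not expect a serious obstacle. The delicate point — and the reason for this specific choice of $n_0$ — is that a smaller seed would violate the Margenstern bound $m \leq \sigma(n_0)+1$, whereas a larger seed would leave gaps between consecutive multiples of $n_0$ exceeding $2\sqrt{x}$. Taking $n_0$ to be the least power of $2$ at or above $\sqrt{x}$ balances these two opposing constraints against each other, and the trivial edge case $x = 1$ (giving $n_0 = 1$ and $n = 2$) is handled by the same inequality chain.
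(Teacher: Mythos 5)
Your proof is correct. Note that the paper itself offers no proof of Lemma \ref{lem:veryshort}: it is quoted directly from Hausman and Shapiro \cite[Theorem 4.1]{HS84}, so there is no internal argument to compare against. What you have done is supply a complete, self-contained derivation from Lemma \ref{lem:margenstern} (itself cited from Margenstern, but at least stated in the paper), and the argument checks out in every detail: for $n_0=2^k$ one has $\sigma(n_0)+1=2n_0$; the minimality of $n_0$ among powers of $2$ with $n_0\geq\sqrt{x}$ gives $n_0<2\sqrt{x}$, hence the strict upper bound $n=mn_0\leq x+n_0<x+2\sqrt{x}$; and the chain $m\leq x/n_0+1\leq\sqrt{x}+1\leq 2\sqrt{x}\leq 2n_0=\sigma(n_0)+1$ is valid precisely in the stated range $x\geq 1$, so Lemma \ref{lem:margenstern} applies. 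The edge case $x=1$ ($n_0=1$, $n=2$) is handled correctly since $1$ is practical. This is essentially the classical argument behind the Hausman--Shapiro result (dense practical progressions seeded by powers of $2$, with the seed tuned to $\sqrt{x}$), so while it is "different from the paper" only in the sense that the paper delegates the proof to a reference, it is exactly the kind of proof one would want to see written out.
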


Let $\Phi(x,y)$  denote the number of natural numbers $n\leq x$ divisible by no primes $\leq y$. The following lemma is a consequence of Brun's sieve. Variants can be found, e.g., as \cite[Theorem 1, p. 201]{HR83} or \cite[Theorem 3, p. 400]{tenenbaum95}.

\begin{lem}\label{lem:sieve} Uniformly for $2 \leq y \leq x$, we have $\Phi(x,y) \ll x/\log{y}$. If we assume also that $x > c_6 y$ for a suitable large absolute constant $c_6$, then $\Phi(x,y) \gg x/\log{y}$.
\end{lem}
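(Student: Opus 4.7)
The plan is to derive both halves of Lemma \ref{lem:sieve} as standard consequences of Brun's combinatorial sieve applied to the integers $n\leq x$ sifted by the primes $p\leq y$; this is precisely the setting of the ``fundamental lemma'' of sieve theory.

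For the upper bound, valid throughout $2 \leq y \leq x$, I would split into two ranges. When $y \leq x^{1/2}$, Brun's pure sieve (or equivalently Selberg's $\Lambda^2$-sieve) yields
\[ \Phi(x,y) \ll x \prod_{p \leq y}\left(1 - \frac{1}{p}\right), \]
and by Mertens' theorem the product is $\asymp 1/\log y$, delivering the stated bound. When $x^{1/2} < y \leq x$, any $n\leq x$ contributing to $\Phi(x,y)$ is either $1$ or a prime in $(y,x]$, since two prime factors each exceeding $y > x^{1/2}$ would force $n > x$. Hence $\Phi(x,y) \leq 1 + \pi(x) - \pi(y) \ll x/\log x \leq x/\log y$ by Chebyshev's bound, completing the upper estimate.

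For the lower bound I would invoke the fundamental lemma in the sharper form
\[ \Phi(x,y) = x\prod_{p \leq y}\left(1 - \frac{1}{p}\right)\bigl(1 + O(u^{-u/2})\bigr) + O(2^u), \]
where $u := \log x/\log y$. The hypothesis $x > c_6 y$ gives $u \geq 1 + (\log c_6)/\log y$, and choosing $c_6$ sufficiently large forces $u$ to exceed an absolute threshold beyond which the multiplicative error is at most $1/2$ and the additive $O(2^u)$ term is absorbed by the main term $\asymp x/\log y$ (again by Mertens). This yields $\Phi(x,y) \gg x/\log y$ with an absolute implied constant.

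The only subtlety worth flagging is the necessity of the hypothesis $x > c_6 y$ for the lower bound: without it, the regime $y$ close to $x$ becomes degenerate (e.g., $\Phi(y,y) = 1$, yet $y/\log y \to \infty$), so the condition is present precisely to exclude this case. Since the result is already available in the cited references \cite{HR83,tenenbaum95}, the main work is simply identifying the correct formulation of Brun's sieve to invoke; no genuine technical obstacle remains.
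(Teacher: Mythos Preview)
The paper gives no proof of this lemma at all: it states that the result is a consequence of Brun's sieve and points to \cite{HR83,tenenbaum95}. Your final paragraph is therefore already aligned with what the paper does, and your upper-bound sketch (split at $y=x^{1/2}$, sieve on one side, count primes on the other) is correct and a reasonable elaboration.

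Your lower-bound argument, however, contains a genuine gap. You assert that ``choosing $c_6$ sufficiently large forces $u$ to exceed an absolute threshold,'' but this is false: from $x>c_6 y$ you only get $u=\log x/\log y \geq 1+(\log c_6)/\log y$, and for $y=x/c_6$ this gives $u\to 1$ as $x\to\infty$, regardless of how large the fixed constant $c_6$ is. When $u$ is this close to $1$, the multiplicative error $O(u^{-u/2})=O(1)$ in your displayed formula gives no lower bound whatsoever; indeed, no form of the fundamental lemma can produce a nontrivial lower bound for $\Phi(x,y)$ in the range $1<u<2$, since there $\Phi(x,y)$ is essentially a prime count and the parity obstruction applies. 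Separately, the additive $O(2^u)$ term is nonstandard and actually becomes $O(x)$ at $y=2$, so it cannot be ``absorbed'' uniformly.

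The repair is to reuse the same dichotomy you employed for the upper bound. For $y\leq x^{1/2}$ one has $u\geq 2$, and then the fundamental lemma (in its usual form, with multiplicative error $O(e^{-u})$ or $O(u^{-u})$ and no spurious additive term) gives $\Phi(x,y)\gg x\prod_{p\leq y}(1-1/p)\asymp x/\log y$. For $x^{1/2}<y<x/c_6$, note that $\Phi(x,y)\geq \pi(x)-\pi(y)$; Chebyshev bounds give $\pi(x)-\pi(y)\gg x/\log x$ once $c_6$ is large, and since $\log y>\tfrac{1}{2}\log x$ in this range, $x/\log x\asymp x/\log y$. This closes the gap.
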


We now prove Theorem \ref{thm:uniformpractical}, treating the upper and lower estimates separately.

\begin{proof}[Proof of the upper bound in Theorem \ref{thm:uniformpractical}] Suppose that $n \leq x$ and $f(n)\geq y$. By the upper bound in \eqref{eq:saias}, we may restrict our attention to non-practical $n$. Let $d$ be the practical component of $n$ and write $n=dq$. By Lemma \ref{lem:component}, $\sigma(d) = f(n)$. In particular, since we are assuming that $f(n)\geq y\geq 4$, we must have that $d > 1$. Moreover, since $n$ is not practical, $d < n$. Thus, $q > 1$ and \[ P^{-}(q) > \sigma(d)+1 > d.\] Hence, \[ d^2 < d \cdot P^{-}(q) \leq dq = n \leq x,\] and so $d \leq \sqrt{x}$. 
	
Given $d$, the number of possibilities for $n$ is bounded above by the number of $q \leq x/d$ with $P^{-}(q) > d$. Since $2 \leq d \leq x/d$, we may apply Lemma \ref{lem:sieve} to find that the number of possibilities for $q$ is $\ll \frac{x}{d\log{d}}$. Since $\sigma(d) = f(n) \geq y$ and (crudely) $\sigma(d) < d^2$, it follows that $d > \sqrt{y}$. Hence, using partial summation and \eqref{eq:saias}, we see that the number of possibilities for $n$ is 
\begin{align*} \ll x\sum_{\substack{\sqrt{y} < d \leq \sqrt{x}\\d\text{ practical}}} \frac{1}{d\log{d}}&\leq x\frac{PR(\sqrt{x})}{\sqrt{x} \log\sqrt{x}} + x\int_{\sqrt{y}}^{\sqrt{x}} PR(t) \frac{1+\log{t}}{(t\log{t})^2}\, dt \\
	&\ll \frac{x}{(\log{x})^2} + x\int_{\sqrt{y}}^{\sqrt{x}}\frac{dt}{t (\log{t})^2} \ll \frac{x}{(\log{x})^2} + \frac{x}{\log{y}}\ll \frac{x}{\log{y}}. \qedhere\end{align*}
\end{proof}

\begin{proof}[Proof of the lower bound in Theorem \ref{thm:uniformpractical}] The proof is suggested by that offered for the upper bound, but some care is necessary to ensure uniformity throughout the stated range of $x$ and $y$. 
	
First, we treat the range when $x^{1/10}\leq y\leq x$. In this domain, we use the trivial lower bound $N(x,y) \geq N(x,x)$. We estimate the right-hand side from below by counting practical numbers $n$ belonging to the interval $[\frac{x+1}{2},x]$. Note that for such $n$, we have $f(n)=\sigma(n) \geq 2n-1 \geq x$  (using for the first inequality that $n-1$ is a sum of proper divisors of $n$), and so $n$ is indeed counted by $N(x,x)$. 

If $6 \leq x \leq 11$, then $n=6$ is a practical number in $[\frac{x+1}{2},x]$. Similarly, if $4 \leq x \leq 6$, then $n=4$ works. Finally, if $x\geq 11$, then Lemma \ref{lem:veryshort} gives a practical number $n$ with
\[ \frac{x+1}{2} < n < \frac{x+1}{2} + 2\sqrt{\frac{x+1}{2}} \leq x. \]
Hence, we always have $N(x,x)\geq 1$. (Recall that we only consider $x\geq 4$.) Moreover, by Lemma \ref{lem:bertrand}, there are $\gg x/\log{x}$ practical numbers in $[\frac{x+1}{2},x]$ once $x$ is large. It follows that $N(x,x) \gg x/\log{x}$ for all $x\geq 4$. So if $x^{1/10} \leq y\leq x$, then
\[ N(x,y) \geq N(x,x) \gg x/\log{x} \gg x/\log{y}, \]
which gives the lower bound of the theorem in this case.

Now suppose that $4 \leq y \leq x^{1/10}$. We consider numbers of the form $n=dq\leq x$, where $d$ is a practical number in $(y,y^3]$ and where $P^{-}(q) > y^6$. For any such $n$, we have $f(n) \geq f(d) \geq d > y$. Moreover, each $n$ constructed in this way arises exactly once, since $q$ is determined as the largest divisor of $n$ supported on primes $> y^6$. Given $d$, the number of corresponding $q$ is $\Phi(x/d,y^6)$. If $x$ is large, then 
\[ \frac{x/d}{y^6} \geq \frac{x}{y^9} \geq x^{1/10} > c_6,\]
and so Lemma \ref{lem:sieve} gives
\begin{equation}\label{eq:weget} \Phi(x/d,y^6) \gg \frac{x}{d\log{y}}. \end{equation}
On the other hand, \eqref{eq:weget} is trivial for bounded $x$, since $1$ is always counted by $\Phi(x/d,y^6)$. Thus, \eqref{eq:weget} holds in any case. Hence, the number of $n$ constructed in this way is
\[ \gg \frac{x}{\log{y}}\sum_{\substack{y < d \leq y^3\\d\text{ practical}}}\frac{1}{d}. \] 
That the sum appearing here is $\gg 1$ for large $y$ follows from partial summation and the lower bound in \eqref{eq:saias}. For bounded $y$, the sum is also $\gg 1$, since Lemma \ref{lem:veryshort} guarantees that there is at least one practical number between $y$ and $y^3$. (Certainly $y + 2y^{1/2} < 3y < y^3$ when $y \geq 4$.) This completes the proof of the lower bound.
\end{proof}

\begin{proof}[Proof of Corollary \ref{cor:fdensity}] One can detect whether or not $f(n) = m$ given just the list of divisors of $n$ not exceeding $m+1$. Thus, whether or not $f(n) = m$ depends only on the residue class of $n$ modulo $(m+1)!$. This gives the first two assertions of the corollary. For the third, notice that $1-\sum_{n=1}^{N}\rho_m$ represents the density of the set of $n$ with $f(n) > N$, which is $\ll 1/\log{N}$ by Theorem \ref{thm:uniformpractical}. Letting $N\to\infty$ completes the proof.\end{proof}

\section{Proof of Theorem \ref{thm:showstopper}}\label{sec:showstopper}

We divide the proof of Theorem \ref{thm:showstopper} into two parts.

\subsection{The upper bound in Theorem \ref{thm:showstopper}}
Central to the proof of both halves of Theorem \ref{thm:showstopper} is the observation, immediate from Lemma \ref{lem:component}, that $m$ belongs to the range of $f$ precisely when $m = \sigma(n)$ for some practical number $n$. Thus, we are really asking  in Theorem \ref{thm:showstopper} for estimates on the range of $\sigma$ restricted to practical inputs.

\begin{lem}\label{lem:bigorsmall} Let $A\geq 30$. Suppose that $x \geq 3$. If $n$ is a practical number with $x^{3/4} < n \leq x$, then either
	\begin{equation}\label{eq:bigOmega} \Omega(n) > 2A\log\log{x} \end{equation}
	or
	\begin{equation}\label{eq:littleomega} \omega(n) > \frac{1}{2\log{A}} \log\log{x}. \end{equation}
\end{lem}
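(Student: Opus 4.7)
My plan is to argue by contrapositive: assuming both $\Omega(n) \leq 2A \log\log x$ and $\omega(n) \leq R := \frac{\log\log x}{2\log A}$, I will show $n \leq x^{3/4}$, contradicting $n > x^{3/4}$.

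An easy subcase is $R < 1$, equivalently $\log x < A^2$: here $\omega(n) \leq R$ forces $\omega(n) = 0$, hence $n = 1$, contradicting $n > x^{3/4} \geq 3^{3/4} > 1$. So I may assume $\log\log x \geq 2\log A$ throughout.

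For the main regime, I will combine the Stewart--Sierpi\'nski inequality $p_{j+1} \leq \sigma(n_j) + 1 \leq 2\sigma(n_j)$ with the elementary bound $\sigma(p^e) \leq 2 p^e$ to obtain the multiplicative recurrence
\[ T_{j+1} \leq (1 + T_j)(1 + e_{j+1}), \quad \text{where } T_j := \log_2 \sigma(n_j). \]
Iterating from $T_0 = 0$, the accumulated ``$+1$'' errors form a geometric series with sum at most $2$ (since each $1 + e_j \geq 2$), giving
\[ \log_2 n \leq T_r \leq 2 \prod_{j=1}^r (1 + e_j). \]
AM--GM then gives $\prod_j(1+e_j) \leq (1 + \Omega(n)/\omega(n))^{\omega(n)}$. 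Since $(1+K/r)^r$ is increasing in both $K$ and $r$ (monotonicity in $r$ uses $\log(1+u) > u/(1+u)$ for $u > 0$), the hypotheses on $\Omega(n)$ and $\omega(n)$ yield
\[ \log_2 n \leq 2(1 + 4A\log A)^R. \]

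All that remains is a numerical check. Taking logarithms of the target inequality $\log n \leq (3/4)\log x$ and substituting the value of $R$, what must be verified is
\[ f(A) := 2\log A - \log(1 + 4A\log A) \geq \log (8\log 2 / 3) \approx 0.614. \]
At $A = 30$ direct computation gives $f(30) \approx 0.786$, and a short derivative calculation shows $f$ is increasing for $A \geq 30$. The principal obstacle is precisely this final numerical step: the constants close with so little slack at $A = 30$ that the threshold in the hypothesis is essentially forced, reflecting that the lemma is genuinely delicate near the boundary.
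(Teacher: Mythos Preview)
Your proof is correct and follows the same skeleton as the paper's: obtain a lower bound on $d(n)=\prod_j(1+e_j)$ in terms of $\log x$, apply AM--GM to relate $d(n)$ to $\Omega(n)$ and $\omega(n)$, and finish with a calculus/numerical check at $A=30$.

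The one genuine difference is how the lower bound on $d(n)$ is obtained. The paper uses the one-line observation that since $n$ is practical, each of $1,\dots,n$ is a subset-sum of the $d(n)$ divisors of $n$, so $2^{d(n)}\ge n>x^{3/4}$ and hence $d(n)>\log x$ immediately. You instead iterate the Stewart--Sierpi\'nski inequalities to derive the recurrence $T_{j+1}\le(1+T_j)(1+e_{j+1})$ and sum the resulting geometric series to reach $\log_2 n\le 2d(n)$; this is weaker by a factor of~$2$ and requires more bookkeeping, but it suffices. Your final numerical condition $f(A)\ge\log(8\log 2/3)$ (which tacitly uses the main-case assumption $\log\log x\ge 2\log A$ to absorb the constant) plays the same role as the paper's verification that $\lambda\log(4A/\lambda)>1$ forces $\lambda>1/(2\log A)$, which reduces to $A\ge 8\log A$.
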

\begin{proof} Since $n$ is practical, every integer in $[1,n]$ can be written as a subset-sum of divisors of $n$. Thus, $2^{d(n)} \geq n$, so we can use the hypothesis that $n > x^{3/4}$ to show \[ d(n) \geq \frac{\log{n}}{\log{2}} > \frac{3/4}{\log{2}} \log{x} > \log{x}.\]
Suppose that $n = \prod_{i=1}^{l}p_i^{e_i}$ is the factorization of $n$ into primes, where $l=\omega(n)$. Since $d(n) = \prod_{i=1}^{l}(e_i+1) > \log{x}$, the inequality between the arithmetic and geometric means gives that
\begin{equation}\label{eq:AMGM} \frac{1}{l^l}\left(\sum_{i=1}^{l}(e_i+1)\right)^l \geq \prod_{i=1}^{l}(e_i+1) > \log{x}. \end{equation}

Now assume that \eqref{eq:bigOmega} fails. Then $\sum_{i=1}^{l}(e_i+1) \leq 2 \sum_{i=1}^{l}e_i \leq 4A\log\log{x}$,  and \eqref{eq:AMGM} gives $(\frac{4A\log\log{x}}{l})^l > \log{x}$. Writing $l = \lambda \log\log{x}$, we deduce that
\[ \left(\frac{4A}{\lambda}\right)^{\lambda \log\log{x}} > \log{x}, \quad\text{and so}\quad \lambda \log \frac{4A}{\lambda} > 1. \]
This latter inequality, along with the condition $A \geq 30$, implies that $\lambda > \frac{1}{2\log{A}}$ (by a short exercise in calculus). Since $\omega(n)= \lambda \log\log{x}$, we have \eqref{eq:littleomega}.\end{proof}

The next lemma, which belongs to the study of the anatomy of integers, bounds from above the number of $n$ with an abnormally large number of small prime factors.

\begin{lem}\label{lem:HT} Let $x, y \geq 2$, and let $k \geq 1$. The number of $n \leq x$ with $\Omega(n;y) \geq k$ is $\ll \frac{k}{2^k} x \log{y}$.
\end{lem}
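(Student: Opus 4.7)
The plan is to apply Rankin's inequality, combined with an odd-even decomposition that neutralizes the prime $2$. Since $y \geq 2$, writing $n = 2^a m$ with $m$ odd gives $\Omega(n;y) = a + \Omega(m;y)$, so
\[
\#\{n \leq x : \Omega(n;y) \geq k\} = \sum_{a \geq 0} \#\{m \leq x/2^a : m \text{ odd},\, \Omega(m;y) \geq k - a\}.
\]

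For $a \geq k$, the constraint on $\Omega(m;y)$ is vacuous, and a trivial count of odd integers up to $x/2^a$ gives a geometric tail of total size $O(x/2^k)$. For $0 \leq a < k$, Rankin's trick (that $2^{\Omega(m;y) - (k - a)} \geq 1$ whenever $\Omega(m;y) \geq k - a$) yields
\[
\#\{m \leq x/2^a : m \text{ odd},\, \Omega(m;y) \geq k-a\} \leq 2^{a-k}\sum_{\substack{m \leq x/2^a \\ m \text{ odd}}} 2^{\Omega(m;y)}.
\]

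The crux of the argument is then the uniform estimate
\[
\sum_{\substack{m \leq X \\ m \text{ odd}}} 2^{\Omega(m;y)} \ll X \log y \qquad (2 \leq y \leq X).
\]
Writing each odd $m$ as $m_1 m_2$ with $m_1$ the odd $y$-smooth part and $P^-(m_2) > y$ (so that $\Omega(m;y) = \Omega(m_1)$), the sum reduces to $\sum_{m_1} 2^{\Omega(m_1)}\,\Phi(X/m_1, y)$ over odd $y$-smooth $m_1 \leq X$. Lemma \ref{lem:sieve} bounds $\Phi(X/m_1, y) \ll (X/m_1)/\log y$ in the main range $m_1 \leq X/y$, and a Mertens calculation gives
\[
\sum_{m_1 \text{ odd } y\text{-smooth}} \frac{2^{\Omega(m_1)}}{m_1} = \prod_{2 < p \leq y}\left(1 - \tfrac{2}{p}\right)^{-1} \asymp (\log y)^2;
\]
excluding $p = 2$ removes what would otherwise be a divergent Euler factor. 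The product of the two estimates yields $X \log y$, after a separate and somewhat more delicate treatment of the short range $m_1 > X/y$ (via standard multiplicative-function machinery of Shiu type).

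Putting the pieces together, each $a \in \{0, 1, \ldots, k-1\}$ contributes $\ll 2^{a-k}(x/2^a) \log y = 2^{-k} x \log y$, and summing over the $k$ admissible values of $a$ produces $\ll k \cdot 2^{-k} x \log y$; the tail $a \geq k$ adds only $O(x/2^k)$, which is absorbed into the main term. The central obstacle throughout is that $\sum_{n \leq x} 2^{\Omega(n;y)}$ is borderline divergent because of the prime $2$, so a naive Rankin inequality applied to all $n$ would fail; the odd-even split is exactly what localizes this divergence into the explicit geometric factor $2^{-a}$, producing the extra factor of $k$ in the final bound.
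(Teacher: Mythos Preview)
Your overall strategy---splitting off the power of $2$ and then applying Rankin's trick with base exactly $2$ to odd integers---is sound and genuinely different from the paper's. The paper applies Rankin with $v=2-1/k$ directly to all $n\le x$: the Euler factor at $p=2$ is then $1/(2-v)=k$ rather than divergent, while $v^k=(2-1/k)^k\gg 2^k$, so one obtains $\sum_{n\le x} v^{\Omega(n;y)}\ll kx\log y$ in a single stroke. In your argument the factor $k$ arises as the number of terms $a=0,\dots,k-1$; in the paper's it arises as the $p=2$ Euler factor. Both are legitimate ways of taming the prime $2$.

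Where your write-up is incomplete is the proof of the key estimate $\sum_{m\le X,\ m\text{ odd}}2^{\Omega(m;y)}\ll X\log y$. The decomposition $m=m_1m_2$ handles $m_1\le X/y$ cleanly via Lemma~\ref{lem:sieve}, but the appeal to ``Shiu-type machinery'' for the range $X/y<m_1\le X$ is vague: Shiu's theorem concerns short intervals $(x-y,x]$, not $y$-smooth integers in a long range, and it is not clear how to apply it here. In fact the detour through the sieve is unnecessary. Writing $2^{\Omega(m;y)}=\sum_{d\mid m}h(d)$ with $h$ multiplicative, $h(p^e)=2^{e-1}$ for $3\le p\le y$ and $e\ge 1$, and $h(p^e)=0$ for $p>y$ and $e\ge 1$, one gets directly
\[
\sum_{\substack{m\le X\\ m\text{ odd}}}2^{\Omega(m;y)}\le X\sum_{\substack{d\ge 1\\ d\text{ odd}}}\frac{h(d)}{d}=X\prod_{3\le p\le y}\left(1+\frac{1}{p-2}\right)\ll X\log y,
\]
with no range-splitting at all. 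This is exactly the paper's convolution argument, restricted to odd integers and specialised to $v=2$. So your approach can be completed, but the cleanest fix for the gap is to borrow the paper's own method for this one step.
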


\begin{rmk} As a special case (when $y=x$), the number of $n \leq x$ with $\Omega(n) \geq k$ is $\ll \frac{k}{2^k}x\log{x}$.
\end{rmk} 

\begin{proof} The proof is almost identical to that suggested in Exercise 05 of \cite[p. 12]{HT88}, details of which can be found in \cite[Lemmas 12, 13]{LP07}. Thus, we only sketch it. Let $v:=2-1/k$. Let $g$ be the arithmetic function determined through the convolution identity $v^{\Omega(n;y)}= \sum_{d \mid n}g(d)$. Then $g$ is multiplicative. For $e\geq 1$, we have $g(p^e) = v^e-v^{e-1}$ if $p \leq y$, and $g(p^e) = 0$ if $p > y$. Hence,
\begin{align*} \sum_{n \leq x}v^{\Omega(n; y)} &= \sum_{d \leq x}g(d)\left\lfloor \frac{x}{d}\right\rfloor \leq x \sum_{d\leq x}\frac{g(d)}{d} \\&\leq x\prod_{p \leq y}\left(1 + \frac{v-1}{p} + \frac{v^2-v}{p^2}+ \dots\right) = \frac{x}{2-v}\prod_{3 \leq p \leq y}\left(1 + \frac{v-1}{p-v}\right).
\end{align*}
Now $2-v=1/k$, and the rightmost product is
\[ \leq \exp\left(\sum_{3 \leq p \leq y}\frac{v-1}{p-v}\right) \leq \exp\left(\sum_{3 \leq p \leq y}\frac{1}{p-2}\right) \leq \exp\left(\sum_{p \leq y}\frac{1}{p} + O(1)\right) \ll \log{y}.\] Collecting our estimates, we have shown that
\[ \sum_{n \leq x} v^{\Omega(n;y)} \ll k x\log{y}. \]
But each term with $\Omega(n;y) \geq k$ makes a contribution to the left-hand side that is $\geq v^k \geq (2-1/k)^k = 2^k (1-\frac{1}{2k})^k \gg 2^k$. Thus, the number of such terms is $\ll \frac{k}{2^k} x \log{y}$.
\end{proof}

The next lemma can be viewed as a partial shifted-primes analogue of the Hardy--Ramanujan inequalities. A proof can be found in the text of Prachar \cite[Lemma 7.1, p. 166]{prachar57} (cf. Erd\H{o}s \cite{erdos35}). There a slightly stronger assertion is shown for shifted primes $p-1$; only trivial changes are required to replace $p-1$ with $p+1$.

\begin{lem}\label{lem:EP} Let $t \geq 3$, and let $k \geq 1$. The number of primes $p\leq t$ with $\omega(p+1)=k$ is 
	\[ \ll \frac{t}{(\log{t})^2} \left(\frac{(\log\log{t} + c_7)^{k+2}}{(k-1)!} + 1\right). \]
\end{lem}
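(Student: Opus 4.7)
The plan is to adapt to shifted primes the sieve proof of the Hardy--Ramanujan estimate $\#\{n \leq x : \omega(n) = k\} \ll \frac{x}{\log x}\cdot\frac{(\log\log x + O(1))^{k-1}}{(k-1)!}$. Any prime $p \leq t$ counted by the lemma factors as $p + 1 = q_1^{a_1} \cdots q_k^{a_k}$ with $q_1 < \cdots < q_k$; its squarefree radical $d := q_1 \cdots q_k$ divides $p+1$ and satisfies $d \leq t+1$, so $p$ lies in the arithmetic progression $-1 \pmod d$. Hence we have the trivial inequality
\[
\#\{p \leq t : \omega(p+1) = k\} \;\leq\; \sum_{\substack{d \leq t+1 \\ d\text{ squarefree},\ \omega(d) = k}} \pi(t; d, -1),
\]
whose right side is in fact $\sum_{p \leq t} \binom{\omega(p+1)}{k}$.

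First I would apply Brun--Titchmarsh, $\pi(t; d, -1) \ll t/(\phi(d)\log(t/d))$, and split the sum at $d = \sqrt{t}$. For $d \leq \sqrt{t}$ I would invoke Mertens' estimate $\sum_{q \leq x}\tfrac{1}{q-1} = \log\log x + O(1)$ together with the elementary bound
\[
\sum_{\substack{d\text{ sqf},\ \omega(d) = k \\ d \leq \sqrt{t}}} \frac{1}{\phi(d)} \;\leq\; \frac{1}{k!}\Bigl(\sum_{q \leq \sqrt{t}} \tfrac{1}{q-1}\Bigr)^k \;\ll\; \frac{(\log\log t + O(1))^k}{k!}.
\]
For $d > \sqrt{t}$ I would write $p+1 = dm$ with $m \leq \sqrt{t}+1$ supported on the primes of $d$ and sum instead over the small parameter $m$. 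Combining these two ranges yields a first-approximation bound of shape $\frac{t}{\log t}\cdot\frac{(\log\log t + O(1))^k}{k!}$.

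To sharpen this to the stated form, with denominator $(\log t)^2$ in place of $\log t$, one must exploit the constraint that $p+1$ has \emph{exactly} $k$ distinct prime factors, not merely $k$ specified ones. I would couple the Selberg upper bound sieve for the progression $p \equiv -1 \pmod d$ with a secondary sieve on the cofactor $(p+1)/d$ that suppresses any extra prime divisors outside $\{q_1, \dots, q_k\}$ up to a suitable threshold; this extracts an additional factor of $1/\log t$ in exchange for a controlled $(\log\log t)^{O(1)}$ loss, accounting for the shift from $(k!)^{-1}(\log\log t)^k$ to $((k-1)!)^{-1}(\log\log t + c_7)^{k+2}$. The main obstacle is executing the combined sieve uniformly in $d$ and in the largest prime factor $q_k$; this is carried out in Prachar's text via classical Selberg estimates, and the passage from the shifted primes $p-1$ treated there to $p+1$ requires only a trivial change of the sieved residue class.
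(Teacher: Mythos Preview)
The paper does not give a proof of this lemma at all: it simply cites Prachar's textbook \cite[Lemma 7.1, p.~166]{prachar57} for the $p-1$ version and remarks that passing to $p+1$ requires only trivial changes. Your proposal is therefore strictly more detailed than the paper's own treatment, and it ends in exactly the same place---a pointer to Prachar together with the $p-1 \leadsto p+1$ observation.

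As for the sketch itself, the overall strategy is the correct classical one (Erd\H{o}s/Hardy--Ramanujan adapted to shifted primes), and you correctly isolate the crux: a single application of Brun--Titchmarsh only yields one factor of $\log t$, and the second factor must come from a further sieve exploiting that $p+1$ has \emph{exactly} $k$ prime divisors. One organizational remark: the standard execution (as in Erd\H{o}s 1935 and Prachar) does not fix the full radical $d=q_1\cdots q_k$ and then sieve the cofactor $(p+1)/d$---that cofactor is already supported on $\{q_1,\dots,q_k\}$, so there is nothing left to suppress. Instead one fixes only $k-1$ of the prime factors (say the smallest), writes $p+1 = m \cdot Q$ with $\omega(m)=k-1$ and $Q$ a prime power coprime to $m$, and then sieves simultaneously for $mQ-1$ prime and $Q$ free of small prime factors; this is what buys the second $1/\log t$. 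Your description can be made to work with a little reinterpretation, but if you intend to fill in the details rather than cite Prachar, organizing around the $k-1$ smallest primes is cleaner.
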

	
\begin{proof}[Proof of the upper bound in Theorem \ref{thm:showstopper}] It is enough to prove the result for large values of $A$. Suppose that $m\leq x$ is an additive endpoint, and write $m = \sigma(n)$ with $n$ practical. Put $Z := 2A\log\log{x}$. The number of values of $m$ corresponding to an integer $n \leq x^{3/4}$ or an $n$ with $\Omega(n) > Z$ is, by Lemma \ref{lem:HT}, 
	\[ \ll x^{3/4} + \frac{Z}{2^{Z}} x \log{x} \ll_{A} \frac{x}{(\log{x})^A}. \]
Thus, with \[ Z' := \frac{1}{2\log{A}} \log\log{x},\] Lemma \ref{lem:bigorsmall} allows us to assume that
\begin{equation}\label{eq:lotsaprimes} \omega(n) \geq Z'. \end{equation}

We now show that most of the primes dividing $n$ make a large contribution to $\Omega(\sigma(n)) = \Omega(m)$. We claim we can assume that both of the following hold:
\begin{enumerate} 
\item There are fewer than $Z'/4$ primes $p$ for which $p^2 \mid n$.
\item There are fewer than $Z'/4$ primes $p$ dividing $n$ for which
\begin{equation}\label{eq:largeshifted} \Omega(p+1) \leq 16 A\log{A}. \end{equation}
\end{enumerate}
With $K:= \lceil Z'/4\rceil$, the number of $n\leq x$ which are exceptions to (i) is, by the multinomial theorem,
\[ \leq x \sum_{\substack{d \leq x, \text{ squarefree}\\ \omega(d) =K}}\frac{1}{d^2} \leq \frac{x}{K!}\left(\sum_{p \leq x}\frac{1}{p^2}\right)^K \leq  x (e/K)^{K} < x/(\log{x})^A, \]
once $x$ is large. (We use here that $\sum{p^{-2}} < 1$ and the elementary inequality $K! \geq (K/e)^k$.) To handle (ii), we observe that from Lemma \ref{lem:EP} and partial summation, the sum of the reciprocals of all $p$ satisfying \eqref{eq:largeshifted} converges. Let $S$ denote this sum. Then the number of exceptions to (ii) is, for large $x$,
\[ \leq \frac{x}{K!}\Bigg(\sum_{\substack{p \leq x\\ p\text{ satisfies \eqref{eq:largeshifted}}}}\frac{1}{p}\Bigg)^K \leq  x (eS/K)^{K} < x/(\log{x})^A. \]
Hence, we can indeed assume (i) and (ii). 

From \eqref{eq:lotsaprimes}, it now follows that there are at least $Z' - 2\frac{Z'}{4} = \frac{Z'}{4}$ primes $p$ for which $p \parallel n$ and for which $\Omega(p+1) > 16 A\log{A}$. Hence,
\[ \Omega(m) = \Omega(\sigma(n)) \geq \sum_{p\parallel n} \Omega(p+1) > 16 A\log{A} \cdot \frac{Z'}{4} = 2 A\log\log{x}. \]
But by Lemma \ref{lem:HT}, the number of $m \leq x$ with $\Omega(m)$ this large is $\ll_{A} x/(\log{x})^A$. This completes the proof of Theorem \ref{thm:showstopper} for large $x$. If $x$ is bounded in terms of $A$, then the theorem is trivial.
\end{proof}

\begin{rmk} The method given here can be pushed to yield the more explicit result that the count of $m\leq x$ that occur as additive endpoints is smaller than
\[ x/\exp\left(c_8 \log\log{x} \frac{\log\log\log{x}}{\log\log\log\log{x}}\right). \]
\end{rmk}

\subsection{The lower bound in Theorem \ref{thm:showstopper}}
The lower bound in Theorem \ref{thm:showstopper} will be deduced from the following proposition, which may be of interest outside of this context. 

\begin{prop}\label{prop:thickness} Let $A> 0$. There is a constant $c=c(A)$ so that the following holds. If $x$ is sufficiently large, say $x > x_0(A,c)$, then any subset $\Ss \subset [1,x]$ with
	\[ \#\Ss \leq x/\exp(c (\log\log{x})^3) \]
	satisfies
	\[ \#\sigma^{-1}(\Ss) \leq x/(\log{x})^A. \]
	Here $\sigma^{-1}(\Ss)$ denotes the set of $n$ with $\sigma(n) \in \Ss.$
\end{prop}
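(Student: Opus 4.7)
My plan is to bound $\#\sigma^{-1}(\Ss)$ by proving a uniform fiber bound on a ``generic'' subset of $[1,x]$. The driving heuristic is that a typical $n \leq x$ is nearly squarefree with $\omega(n) \sim \log\log x$ prime factors, so writing $n = de$ with $d$ squarefull and $e$ squarefree coprime to $d$, the identity $\sigma(n) = \sigma(d) \prod_{p \mid e}(p+1)$ reduces the reconstruction of $e$ from $\sigma(e)$ to bounding the number of ordered factorizations of $\sigma(e)$ into $\omega(e)$ parts. Since $\Omega(\sigma(n))$ is typically of order $(\log\log x)^2$, a crude Piltz-divisor bound gives a fiber size of $\exp(O((\log\log x)^3))$, matching the budget $\exp(c(\log\log x)^3)$ exactly.

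First I would define a good subset $G \subset [1,x]$ with $\#([1,x] \setminus G) \ll x/(\log x)^A$ by requiring $\omega(n) \leq K := \lceil C_1 \log\log x\rceil$ (possible by Lemma~\ref{lem:HT}) and squarefull part $\leq (\log x)^B$ (using the elementary $\#\{d \leq N : d \text{ squarefull}\} \ll N^{1/2}$). On the target side, decompose $\Ss = \Ss_1 \cup \Ss_2$ where $\Ss_1 := \{s \in \Ss : \Omega(s) \leq K'\}$ with $K' := \lceil C_2 (\log\log x)^2\rceil$. By Lemma~\ref{lem:HT}, $\#\Ss_2 \ll x/(\log x)^A$; to bound its preimage I need a separate moment bound on the multiplicative function $n \mapsto v^{\Omega(\sigma(n))}$ (for $v$ slightly larger than $1$) to show that $\#\{n\leq x : \Omega(\sigma(n)) > K'\} \ll x/(\log x)^A$, handling $\sigma^{-1}(\Ss_2)$.

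The main term is the count of $n \in G$ with $\sigma(n) = s \in \Ss_1$. Fix such an $s$ and write $n = de$ as above: the squarefull part $d$ takes $\ll (\log x)^{B/2}$ values, and given $d$, the squarefree $e$ is constrained by $\sigma(e) = s/\sigma(d)$ and $\omega(e) \leq K$; the number of admissible $e$ is at most the number of ordered factorizations of $s/\sigma(d)$ into at most $K$ parts, namely $\leq K \cdot d_K(s/\sigma(d))$. Using the elementary bound $d_K(p^e) = \binom{e+K-1}{K-1} \leq 2^{e+K-1}$ and multiplying over primes gives $d_K(m) \leq 2^{K\Omega(m)}$, so the fiber count is at most $K \cdot 2^{KK'} = \exp(C' (\log\log x)^3)$ for a constant $C' = C'(A)$. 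Summing,
\[
\#(\sigma^{-1}(\Ss_1) \cap G) \leq \#\Ss_1 \cdot (\log x)^{B/2} \cdot \exp(C' (\log\log x)^3) \leq \frac{x}{(\log x)^A},
\]
provided $c = c(A)$ in the hypothesis is chosen larger than $C'$. Combining with the negligible contributions from $\Ss_2$ and from $n \notin G$ completes the proof.

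The main obstacle is the Tur\'an--Kubilius-style moment estimate in Step 1 showing that $\#\{n\leq x : \Omega(\sigma(n)) > K'\} \ll x/(\log x)^A$; this is not usually stated but should follow by standard techniques, estimating $\sum_{n\leq x} v^{\Omega(\sigma(n))}$ via multiplicativity and a Rankin-type bound, then applying a Chernoff-type tail inequality. The remainder is essentially bookkeeping in which the balance $K \sim \log\log x$, $K' \sim (\log\log x)^2$, $KK' \sim (\log\log x)^3$ dictates the exponents in the statement.
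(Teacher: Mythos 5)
Your plan is correct in outline, reaches the same threshold $\exp(c(\log\log x)^3)$, and the reason that threshold appears is the same (a per-target fiber bound of size $\exp(O_A((\log\log x)^3))$), but you obtain that fiber bound by a genuinely different route. You write $n=de$ with $d$ squarefull and $e$ squarefree, restrict to $n$ with $d\leq(\log x)^B$ and $\omega(n)\leq K\ll\log\log x$, and count the $e$ with $\sigma(e)=s/\sigma(d)$ by a Piltz-divisor bound $d_K(s/\sigma(d))\leq 2^{K\Omega(s)}$, the saving coming from $\Omega(s)\ll(\log\log x)^2$ on the target side. The paper instead extracts from each target $s$ its large squarefree kernel $m'$ (the product of the distinct primes exceeding $(\log x)^{3A}$ dividing $s$), shows $m'$ is nearly as large as $x$ and has $\ll(\log\log x)^2$ prime factors, and invokes Lemma~\ref{lem:sigmadivides} to bound the number of $n$ with $m'\mid\sigma(n)$ by $\frac{x}{m'}(c_9\log x)^{3\omega(m')}$; the saving there comes from $x/m'$ being small. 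The two arguments are cousins---both count factorizations of a large portion of $s$ into values $\sigma(p^e)$---but yours trades the paper's Lemma~\ref{lem:sigmaRepeated} (needed there so that stripping higher powers of large primes from $s$ does not shrink $m'$ too much) for the elementary bound on integers with large squarefull part, and it avoids the Bell-number bookkeeping in Lemma~\ref{lem:sigmadivides}. That is a legitimate and arguably cleaner organization.

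The one ingredient you defer---that $\#\{n\leq x:\Omega(\sigma(n))>C_2(\log\log x)^2\}\ll x/(\log x)^A$---is precisely the paper's Lemma~\ref{lem:sigmaOmega}, so you have correctly isolated the real remaining content; note that bounding $\#\Ss_2$ alone does nothing, as you observe, since the proposition gives no control on individual fibers outside the good set. One caution about the Rankin/moment route you sketch for this lemma: the Euler factor at $p$ involves $v^{\Omega(p+1)}$, and $\Omega(p+1)$ can be as large as $\log p/\log 2$, so $\sum_p v^{\Omega(p+1)}/p$ is not bounded by $v\sum_p 1/p$; one must split off the primes with $\Omega(p+1)$ abnormally large (Lemma~\ref{lem:HT} plus partial summation handles them). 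The paper's own proof of Lemma~\ref{lem:sigmaOmega} avoids the Euler product altogether: after forcing $\Omega(n)\leq 2A\log\log x$, a large value of the additive function $\Omega(\sigma(n))$ must be concentrated on a single prime power $p^e\parallel n$, which either makes the squarefull part of $n$ exceed $(\log x)^{5A/2}$ (if $e\geq 2$) or forces $p\mid n$ with $\Omega(p+1)\gg\log\log x$, and both events are rare. Either way, once that lemma is in place your argument closes.
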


\begin{rmk} It is perhaps surprising that one cannot improve the upper bound on $\#\sigma^{-1}(\Ss)$ very much, even if one assumes that $\Ss$ consists of only a single element! Indeed, plausible conjectures about the distribution of smooth shifted primes $p+1$ (such as what would follow from the Elliott--Halberstam conjecture) imply that for all large $x$, there is a singleton set $\Ss \subset [1,x]$ with $\#\sigma^{-1}(\Ss) > x^{1-\epsilon}$. (Here $\epsilon > 0$ is arbitrary but fixed.) For the Euler $\phi$-function, this result is due to Erd\H{o}s \cite{erdos35} (see also the exposition of Pomerance \cite{pomerance89}); the $\sigma$-version can be proved similarly, replacing $p-1$ with $p+1$ when necessary.
\end{rmk}

To apply Proposition \ref{prop:thickness} to the case of the practical numbers, it is convenient to recall Gronwall's determination of the maximal order of the sum-of-divisors function $\sigma$ \cite[Theorem 323, p. 350]{HW08}. 

\begin{lem}\label{lem:eulerphi} We have $\limsup_{n\to\infty} \frac{\sigma(n)}{n\log\log{n}} = e^{\gamma}$.
\end{lem}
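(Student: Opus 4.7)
The plan is to establish both inequalities separately, exploiting the multiplicativity of $\sigma(n)/n$ together with Mertens' third theorem
\[ \prod_{p\leq y}\left(1 - \frac{1}{p}\right)^{-1} = e^{\gamma}\log y \cdot (1 + o(1)) \qquad (y\to\infty) \]
and the prime number theorem in the form $\theta(y):=\sum_{p\leq y}\log p \sim y$.

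For the upper bound $\limsup \leq e^{\gamma}$, I would first bound crudely
\[ \frac{\sigma(n)}{n} = \prod_{p^a\| n}\left(1 + \frac{1}{p} + \cdots + \frac{1}{p^a}\right) < \prod_{p\mid n}\left(1 - \frac{1}{p}\right)^{-1}. \]
Writing $k=\omega(n)$ and letting $q_k$ denote the $k$-th prime, the function $p\mapsto (1-1/p)^{-1}$ is decreasing, so the right-hand product is maximized (over $n$ with $\omega(n)=k$) when $n$ is supported on the first $k$ primes. Hence $\sigma(n)/n \leq \prod_{p\leq q_k}(1-1/p)^{-1}$. On the other hand, $n\geq 2\cdot 3 \cdots q_k = \exp(\theta(q_k)) = \exp((1+o(1))q_k)$ as $k\to\infty$, so $q_k\leq (1+o(1))\log n$, and therefore $\log q_k \leq \log\log n + o(\log\log n)$. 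Plugging into Mertens yields $\sigma(n)/n \leq (1+o(1)) e^{\gamma}\log\log n$, as required. If $\omega(n)$ stays bounded along a subsequence, then $\sigma(n)/n$ stays bounded too, and the inequality is trivial in that regime.

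For the matching lower bound, I would exhibit an explicit extremal sequence. Take $n_y := \lcm(1,2,\ldots,\lfloor y\rfloor)$, so that $v_p(n_y) = a_p := \lfloor \log y/\log p\rfloor$ for every prime $p\leq y$. Then $\log n_y = \psi(y) \sim y$ by the prime number theorem, so $\log\log n_y \sim \log y$. A direct computation gives
\[ \frac{\sigma(n_y)}{n_y} = \prod_{p\leq y}\frac{1 - p^{-a_p-1}}{1 - 1/p}. \]
Since $p^{a_p+1} > y$ for every $p\leq y$, the numerator product satisfies $\prod_{p\leq y}(1-p^{-a_p-1}) \geq (1-1/y)^{\pi(y)} = 1 - O(\pi(y)/y) \to 1$. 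Combining with Mertens yields $\sigma(n_y)/n_y \sim e^{\gamma}\log y \sim e^{\gamma}\log\log n_y$, which completes the proof.

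The main obstacle, such as it is, lies in verifying that the various $(1+o(1))$ error terms (from the prime number theorem, from Mertens, and from the correction factor $\prod(1-p^{-a_p-1})$) combine cleanly in both directions; all of this is routine and can be found in standard references such as Hardy--Wright.
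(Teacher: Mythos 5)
Your proof is correct. The paper does not prove this lemma at all: it is Gronwall's theorem, quoted with a citation to Hardy and Wright (Theorem 323). Your argument is essentially the standard textbook proof of that result --- the upper bound via $\sigma(n)/n < \prod_{p\mid n}(1-1/p)^{-1}$, majorizing by the first $\omega(n)$ primes and invoking Mertens together with $\theta(y)\sim y$, and the lower bound via the extremal sequence $n_y=\lcm(1,\dots,\lfloor y\rfloor)$ with the correction factor $\prod_{p\le y}(1-p^{-a_p-1})\to 1$. Both halves are sound, including your handling of the bounded-$\omega(n)$ case in the upper bound, so the only difference from the paper is that you supply the proof the paper outsources to its reference.
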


\begin{proof}[Proof of the lower bound in Theorem \ref{thm:showstopper}]
Let $x$ be large. By Lemma \ref{lem:eulerphi}, if $n \leq \frac{x}{2\log\log{x}}$, then $\sigma(n) \leq x$. (We use here that $e^{\gamma} < 2$.) Thus, with $\Ss$ the set of additive endpoints not exceeding $x$, 
\[ \#\sigma^{-1}(\Ss) \geq PR\left(\frac{x}{2\log\log{x}}\right) \gg \frac{x}{(\log{x})(\log\log{x})}, \]
using the lower estimate in \eqref{eq:saias} for the last step. The desired lower bound on $\#\Ss$ now follows from (the contrapositive of) Proposition \ref{prop:thickness}, with $A=1.1$.
\end{proof}

The rest of this section is devoted to the proof of Proposition \ref{prop:thickness}. The proof rests on a $\sigma$-analogue of a result for the Euler function appearing in a paper of Luca and the first author \cite[Lemma 2.1]{LP11}. 

\begin{lem}\label{lem:sigmadivides} Let $x\geq 3$. Let $d$ be a squarefree natural number with $d\leq x$. The number of $n$ for which $d \mid \sigma(n)$ and $\sigma(n) \leq x$ is 
	\[ \leq \frac{x}{d}(c_9 \log{x})^{3\omega(d)}. \]
\end{lem}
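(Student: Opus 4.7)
The plan is to mirror the proof of the Euler-function analogue \cite[Lemma 2.1]{LP11}. Since $\sigma(n)\ge n$, the hypothesis $\sigma(n)\le x$ forces $n\le x$. Because $\sigma$ is multiplicative and $d$ is squarefree, every prime $p\mid d$ must divide $\sigma(q^a)$ for some unitary prime-power divisor $q^a\parallel n$ (necessarily $q\ne p$, as $\sigma(p^a)\equiv 1\pmod{p}$). Grouping these witnesses by their underlying $q$, we obtain a unitary divisor $E\parallel n$ supported on at most $\omega(d)$ distinct primes, with $d\mid\sigma(E)$. Since the number of $n\le x$ with a given $E\parallel n$ is at most $x/E$, the lemma reduces to showing
\[ \sum_{\substack{E\le x,\ \omega(E)\le\omega(d)\\ d\mid\sigma(E)}}\frac{1}{E}\ \le\ \frac{(c_9\log x)^{3\omega(d)}}{d}. \]

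To handle this, I further partition the primes of $d$ among the prime-power constituents of $E$. Writing $E=\prod_{j=1}^{s}q_j^{a_j}$ with the $q_j$ distinct, $d\mid\sigma(E)$ corresponds to an ordered factorization $d=e_1\cdots e_s$ with each $e_j>1$ and $e_j\mid\sigma(q_j^{a_j})$. Dropping the distinctness of the $q_j$ as an upper bound,
\[ \sum_{E}\frac{1}{E}\ \le\ \sum_{s=1}^{\omega(d)}\,\sum_{\substack{d=e_1\cdots e_s\\ e_j>1}}\,\prod_{j=1}^{s}F(e_j),\qquad F(e):=\sum_{\substack{q^{a}\le x\\ e\mid\sigma(q^a)}}\frac{1}{q^a}. \]
The total number of ordered factorizations of a squarefree $d$ with $\omega(d)=k$ into unit-free parts, summed over $s$, is at most $k^k$, which is absorbed comfortably once $F(e)$ is controlled.

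The crucial ingredient is the estimate $F(e)\ll(\log x)/e$, uniform for squarefree $e\in[2,x]$. For $q$ coprime to $e$, the condition $e\mid\sigma(q^a)=(q^{a+1}-1)/(q-1)$ forces $a+1\equiv 0\pmod{T(q)}$, where $T(q):=\lcm_{p\mid e}t_p(q)$ and $t_p(q)$ is the multiplicative order of $q$ modulo $p$ (set to $p$ if $q\equiv 1\pmod{p}$). Summing geometrically in $a$, the contribution of each $q$ is at most $2/q^{T(q)-1}$. The unique residue class modulo $e$ with $T(q)=2$ is $q\equiv -1\pmod{e}$; its aggregate contribution is bounded by
\[ \sum_{\substack{q\le x\\ q\equiv -1\pmod{e}}}\frac{1}{q}\ \le\ \sum_{1\le m\le(x+1)/e}\frac{1}{me-1}\ \ll\ \frac{\log x}{e}. \]
Every other residue class has $T(q)\ge 3$ and contributes $2/q^{T(q)-1}\le 2/q^2$ per $q$; combining Brun--Titchmarsh for primes in residue classes modulo $e$ with partial summation shows these terms sum to $\ll 1/e$.

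I expect the main obstacle to be extracting the factor of $1/e$ from the $T(q)\ge 3$ contribution uniformly in $e$: a naive bound only yields $O(1)$ there, which is inadequate for $e$ larger than $\log x$. The remedy is to keep the residue class of $q$ modulo $e$ in play throughout, exploiting the rapid decay $1/q^{T-1}\le 1/q^2$ and the density $1/\phi(e)$ of primes in each such class, with separate treatment of the range where Brun--Titchmarsh degenerates (using the constraint $q^a\ge\sigma(q^a)/2\ge e/2$). Once $F(e)\ll(\log x)/e$ is secured, substitution into the previous display together with $\sum_j\omega(e_j)=\omega(d)$, $k\le\log x$, and the $k^k$ bound on factorizations yields $\sum_E 1/E\le d^{-1}(c\log x)^{3\omega(d)}$; multiplication by $x$ completes the proof.
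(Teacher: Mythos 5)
Your overall architecture is the same as the paper's: decompose $d=e_1\cdots e_s$ according to which unitary prime-power divisor $q^a\parallel n$ each part divides $\sigma$ of, bound the count of $n$ by $x\prod_j F(e_j)$ with $F(e)=\sum_{q^a\le x,\ e\mid\sigma(q^a)}q^{-a}$, and then sum over factorizations using a Bell-number bound together with $\omega(d)\ll\log x$. The difference is in the key estimate for $F(e)$, and that is where your argument has a genuine gap. You claim $F(e)\ll(\log x)/e$ and propose to handle the $T(q)\ge 3$ terms by ``Brun--Titchmarsh for primes in residue classes modulo $e$.'' But the primes $q$ with $T(q)\ge 3$ occupy \emph{all but one} residue class modulo $e$, so there is no density saving of $1/\phi(e)$ to be had; Brun--Titchmarsh is vacuous here. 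What the constraints you cite actually give is $q^{T(q)-1}\ge\max(q^2,e/2)$, and $\sum_q\min(q^{-2},2/e)\ll e^{-1/2}$, not $\ll e^{-1}$. To recover $O(1/e)$ you would have to fix $t=T(q)$ and count $q$ among the roots of the $t$-th cyclotomic polynomial modulo each prime of $e$, and the subsequent sum over $t\le\log_2 x$ threatens to reintroduce exactly the logarithm you are trying to save. As written, the step does not close, and it is not clear the sketched method proves the claimed bound at all.

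The good news is that the sharper bound is unnecessary, and this is precisely how the paper proceeds: it proves only $F(e)\ll(\log x)^2/e$, which is immediate. If $e\mid\sigma(q^a)$ then $\sigma(q^a)=em$ with $m\le x/e$ and $q^a\ge\sigma(q^a)/2=em/2$; for each fixed pair $(a,m)$ there is at most one prime $q$ with $\sigma(q^a)=em$, and necessarily $a\le\log x/\log 2$, so
\[ F(e)\ \le\ \frac{2}{e}\sum_{m\le x/e}\frac{1}{m}\sum_{a\le\log x/\log 2}1\ \ll\ \frac{(\log x)^2}{e}. \]
Feeding this into your display, the product over $j$ contributes $(C(\log x)^2)^{s}/d$, the number of factorizations contributes at most $\omega(d)^{\omega(d)}\le(\log x/\log 2)^{\omega(d)}$, and the stated exponent $3\omega(d)$ absorbs everything. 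So your proof is repaired simply by weakening the intermediate claim to the one the paper uses; in its current form, the justification of $F(e)\ll(\log x)/e$ is the missing piece.
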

\begin{proof} If $d=1$, the result is clear. Suppose that $d > 1$. Let $n$ be an integer for which $\sigma(n) \in [1,x]$ is a multiple of $d$. Write the prime factorization of $n$ in the form $n = \prod_{i}p_i^{e_i}$. Since $d \mid \sigma(n)$, there is a factorization $d= d_1 d_2 \cdots$ for which each $d_i \mid \sigma(p_i^{e_i})$. Discarding those terms with $d_i=1$ and relabeling, we can assume that $d=d_1 \cdots d_l$, where each $d_i > 1$. Clearly, $l \leq \omega(d)$. 
	
We now fix the factorization $d=d_1\cdots d_l$ and count the number of corresponding $n$. This count does not exceed 
\begin{equation}\label{eq:inserthere} x \prod_{i=1}^{l} \left(\sum_{\substack{p^e:~\sigma(p^e)\leq x\\ d_i \mid \sigma(p^e)}} \frac{1}{p^e}\right). \end{equation}
We proceed to estimate the inner sum in \eqref{eq:inserthere}. If $d_i \mid \sigma(p^e)$, then $\sigma(p^e) = d_i m$, with $m \leq x/d_i$. Since $\sigma(p^e) = 1 + p + \dots +p^e \leq 2p^e$, 
\[ \sum_{\substack{p^e:~\sigma(p^e)\leq x\\ d_i \mid \sigma(p^e)}} \frac{1}{p^e} \leq \frac{2}{d_i}\sum_{m \leq x/d_i} \frac{1}{m}\sum_{p^e\colon~ \sigma(p^e)=md_i} 1. \]
For each fixed $e\geq 1$, there is at most one prime $p$ with $\sigma(p^e)=md_i$; moreover, since $md_i \leq x$, there are no such $p$ once $e> \log{x}/\log{2}$. Thus, 
\[ \frac{2}{d_i}\sum_{m \leq x/d_i} \frac{1}{m}\sum_{p^e\colon~ \sigma(p^e)=md_i} 1 \ll \frac{\log{x}}{d_i} \sum_{m \leq x/d_i}\frac{1}{m} \ll \frac{(\log{x})^2}{d_i}. \]
Inserted back into \eqref{eq:inserthere}, we find that for a certain absolute constant $C > 1$, the number of $n$ corresponding to the given factorization is at most
\[ x \prod_{i=1}^{l} \frac{C(\log{x})^2}{d_i} = \frac{x}{d} C^{l} (\log{x})^{2l} \leq \frac{x}{d} C^{\omega(d)} (\log{x})^{2\omega(d)}. \]

Finally, we sum over unordered factorizations of $d$ into parts $> 1$. Since $d$ is squarefree, the number of such factorizations is precisely $B_{\omega(d)}$, where $B_k$ denotes the $k$th Bell number (the number of set partitions of a $k$-element set). Thinking combinatorially, we have the crude bound $B_k \leq k^k$, and so the total number of $n$ which arise is
\[ \leq \omega(d)^{\omega(d)}\left( \frac{x}{d} C^{\omega(d)} (\log{x})^{2\omega(d)}\right) = \frac{x}{d}(C \omega(d) (\log{x})^2)^{\omega(d)}. \]
By definition, we have $\omega(d) \leq \Omega(d) \leq \log{x}/\log{2}$, where the final inequality follows from the simple observation that $2^{\Omega(d)} \leq d \leq x$. This proves our lemma with $c_9 = (C/\log{2})^{1/3}$.
\end{proof}

\begin{lem}\label{lem:sigmaOmega} Fix $A \geq 3$. The number of $n \leq x$ for which
	\begin{equation}\label{eq:Omegasigma} \Omega(\sigma(n)) \geq 8 A^2 (\log\log{x})^2 \end{equation}
	is $o(x/(\log{x})^A)$, as $x\to\infty$.
\end{lem}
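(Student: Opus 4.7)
The plan is to apply Markov's inequality with a finely tuned exponential moment. Set $z := 1 + 1/\log\log x$ and $k := 8A^2(\log\log x)^2$; then
\[
\#\{n \leq x : \Omega(\sigma(n)) \geq k\} \leq z^{-k} \sum_{n \leq x} z^{\Omega(\sigma(n))}.
\]
Since $\log z \geq 1/(2\log\log x)$ for $x$ large, the prefactor satisfies $z^{-k} \leq (\log x)^{-4A^2}$. It will thus suffice to show $\sum_{n \leq x} z^{\Omega(\sigma(n))} \ll x(\log x)^C$ for some absolute constant $C$; the resulting count $x(\log x)^{C - 4A^2}$ is then $o(x/(\log x)^A)$ as long as $4A^2 - A > C$. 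Should the implicit $C$ turn out to be too large for $A \geq 3$, I would rescale by taking $z = 1 + A/\log\log x$, which boosts the Markov savings linearly in $A$ and comfortably absorbs any absolute $C$.

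The function $f(n) := z^{\Omega(\sigma(n))}$ is non-negative and multiplicative, since $\sigma$ is multiplicative and $\Omega$ is completely additive. A standard sum-of-multiplicative-functions upper bound of Hall--Tenenbaum type yields
\[
\sum_{n \leq x} f(n) \ll \frac{x}{\log x} \prod_{p \leq x} \sum_{e \geq 0} \frac{f(p^e)}{p^e}.
\]
Since $\Omega(\sigma(p^e)) \leq e\log_2 p + O(\log e)$, the Euler-factor tails with $e \geq 2$ contribute only an $O(1)$ amount to the logarithm of the product, and everything reduces to bounding
\[
S := \sum_{p \leq x} \frac{z^{\Omega(p+1)}}{p}.
\]
To control $S$, I would write $z^{\Omega(\cdot)} = 1 * h$ with $h$ multiplicative and $h(p^e) := z^{e-1}(z - 1)$, interchange summations, and apply Brun--Titchmarsh to the resulting shifted-prime sum: $\sum_{p \leq x,\, d \mid p+1} 1/p \ll (\log\log x)/\phi(d)$. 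This gives $S \ll (\log\log x) \sum_d h(d)/\phi(d)$, and the latter sum is an Euler product whose generic $p$-factor is $1 + p(z-1)/((p-1)(p-z))$; since $z - 1 = 1/\log\log x$, Mertens' theorem makes this product $O(1)$ uniformly in $x$. Hence $S \ll \log\log x$ and the claimed bound on $\sum_{n \leq x} f(n)$ follows.

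The principal obstacle is the unboundedness of $z^{\Omega(p+1)}$ across primes $p$, which would normally invalidate off-the-shelf mean-value estimates for multiplicative functions. The choice $z - 1 = 1/\log\log x$ is dictated precisely by this tension: small enough that the Brun--Titchmarsh-based bound on $S$ stays uniform in $x$, yet large enough that the Markov penalty $(\log x)^{-4A^2}$ overwhelms the polylog inflation coming from anomalous primes.
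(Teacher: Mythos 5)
Your argument is sound in outline and genuinely different from the paper's. The paper proceeds combinatorially: it first discards the $n$ with $\Omega(n)>2A\log\log x$ via Lemma \ref{lem:HT}, then pigeonholes to find a single prime power $p^e\parallel n$ with $\Omega(\sigma(p^e))\geq 4A\log\log x$, and handles $e\geq 2$ through the large squarefull divisor of $n$ and $e=1$ by summing $1/p$ over the rare primes with $\Omega(p+1)$ large (again via Lemma \ref{lem:HT} and partial summation). You instead take a global exponential moment and apply Rankin's trick. Both arguments ultimately rest on the same arithmetic input --- the $2^{-k}$-decay of the level sets of $\Omega$ --- but your route buys a cleaner, more quantitative statement (an explicit power saving $(\log x)^{C-4A^2}$) at the cost of needing a uniform bound for $\sum_{p\leq x,\,d\mid p+1}1/p$, which the paper's proof avoids entirely. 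That Brun--Titchmarsh step is in fact fine uniformly for $d\leq x+1$ (for $2d\leq t\leq x$ one has $\pi(t;d,-1)\ll t/(\phi(d)\log(t/d))$, and partial summation plus the at most one prime in $(d,2d]$ gives $\ll\log\log x/\phi(d)$), so your bound $S\ll\log\log x$ holds with an absolute implied constant.

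Two details deserve correction. First, the Hall--Tenenbaum mean-value theorem with the $x/\log x$ prefactor does not literally apply: its hypotheses require $f(p)$ bounded, or at least $\sum_{p\leq y}f(p)\log p\ll y$, and here $f(p)=z^{\Omega(p+1)}$ can be as large as $(p+1)^{\log z/\log 2}$ and the hypothesis fails by a $\log\log$ factor. You should instead use the trivial bound $\sum_{n\leq x}f(n)\leq x\sum_{n\leq x}f(n)/n\leq x\prod_{p\leq x}\bigl(\sum_{e\geq 0}f(p^e)p^{-e}\bigr)$, valid for any non-negative multiplicative $f$; the lost factor of $\log x$ is irrelevant against the saving $(\log x)^{-4A^2}$. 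Second, your fallback of taking $z=1+A/\log\log x$ is misdirected: the Euler product $\sum_d h(d)/\phi(d)$ grows like $e^{A}$ in that regime, which for large $A$ swamps the polynomial gain $8A^3$ in the Markov exponent. Fortunately no rescaling is needed: with $z=1+1/\log\log x$ the constant $C$ coming from $S\leq(1+O(1)\cdot C_{\mathrm{BT}})\log\log x$ is a modest absolute number (comfortably below $30$), while $4A^2-A\geq 33$ for all $A\geq 3$, so the primary choice already closes the argument uniformly in $A$.
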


\begin{proof} We may suppose that $\omega(n) \leq 2A\log\log{x}$. Indeed, Lemma \ref{lem:HT} shows that for $x\geq 3$, the number of $n \leq x$ not satisfying the stronger inequality $\Omega(n) \leq 2A\log\log{x}$ is 
\[ \ll \frac{A \log\log{x}}{2^{A\log\log{x}}} x\log{x} \ll_{A} \frac{x\log\log{x}}{(\log{x})^{2A\log{2}-1}}. \]
Since $A\geq 3$, the exponent $2A\log{2}-1 > A$, and so this upper bound is $o(x/(\log{x})^A)$. 

Writing $\Omega(\sigma(n)) = \sum_{p^e\parallel n}\Omega(\sigma(p^e))$, we thus deduce that if \eqref{eq:Omegasigma} holds, then 
	\begin{equation}\label{eq:Omegasigma2} \Omega(\sigma(p^e)) \geq \frac{8A^2 (\log\log{x})^2}{2A\log\log{x}} = 4A \log\log{x} \end{equation}
for some prime power $p^e \parallel n$. 

Suppose first that $e > 1$. Then (for large $x$) the squarefull part of $n$ is of size at least
	\begin{equation}\label{eq:power5A} p^e \geq \frac{1}{2}\sigma(p^e) \geq \frac{1}{2} 2^{\Omega(\sigma(p^e))} \geq \frac{1}{2} 2^{4A\log\log{x}} > (\log{x})^{5A/2}. \end{equation}

But then the number of possibilities for $n\leq x$ is $\ll x/(\log{x})^{5A/4}$, and so in particular is $o(x/(\log{x})^A)$.  On the other hand, if $e=1$, then \eqref{eq:Omegasigma2} implies that $n$ is divisible by some prime $p$ with $\Omega(p+1) \geq 4A\log\log{x}$. For each such $p$, the number of corresponding $n$ is $\leq x/p < 2x/(p+1)$. Summing over $p$,  we find that the total number of such $n\leq x$ is at most
\[ 2x\sum_{\substack{d \leq x \\ \Omega(d) \geq 4A\log\log{x}}}\frac{1}{d}. \]
Put $Z:=4A\log\log{x}$; by partial summation, along with Lemma \ref{lem:HT} and the final inequality in \eqref{eq:power5A}, this upper bound is
\[ \ll x \frac{Z}{2^Z}\int_2^{x}\frac{\log{t}}{t}\, dt \ll x (\log{x})^2 \frac{Z}{2^Z} \ll_{A} x\frac{(\log{x})^2 \log\log{x}}{(\log{x})^{5A/2}}, \]
and so is $o(x/(\log{x})^{A})$, as $x\to\infty$. This completes the proof.
\end{proof}

\begin{lem}\label{lem:sigmaRepeated} Let $x \geq 3$, and let $z \geq 1$. The number of $n \leq x$ with $\sigma(n)$ divisible by $p^2$ for some prime $p > z$ is $\ll x(\log{x})^2 z^{-1/2}$.
\end{lem}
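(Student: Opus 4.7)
The plan is to split on the size of $z$. If $z \leq (\log x)^4$ then $x(\log x)^2 z^{-1/2} \geq x$, and the conclusion is trivial. So assume $z > (\log x)^4$.

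In this regime, I would union-bound over primes $p > z$, reducing to understanding $\#\{n \leq x : p^2 \mid \sigma(n)\}$ for each such $p$. Exploiting $\sigma(n) = \prod_{q^e \parallel n} \sigma(q^e)$, the condition $p^2 \mid \sigma(n)$ forces at least one of:
\textbf{(A)} some $q^e \parallel n$ has $p^2 \mid \sigma(q^e)$, or
\textbf{(B)} two distinct prime-power divisors $q_1^{e_1}, q_2^{e_2} \parallel n$ each have $p \mid \sigma(q_i^{e_i})$.

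For case (A), the count of corresponding $n$ is at most $x \sum_{q^e:\, p^2 \mid \sigma(q^e)} 1/q^e$. The $e = 1$ contribution comes from primes $q \equiv -1 \pmod{p^2}$, whose reciprocal sum up to $x$ is $\ll (\log x)/p^2$. For $e \geq 2$, the admissible $q$ lie in at most $e$ residue classes modulo $p^2$ (the nontrivial $(e{+}1)$-st roots of unity), while the positivity bound $\sigma(q^e) \geq p^2$ forces $q \geq (p^2/2)^{1/e}$ and hence $1/q^e \leq 2/p^2$ per class; summing over $e \leq \log_2 x$ contributes $\ll (\log x)^2/p^2$. Altogether case (A) contributes $\ll x(\log x)^2/p^2$ per prime $p$.

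For case (B), the count is at most $x\, S(p)^2$ where $S(p) := \sum_{q^e \leq x:\, p \mid \sigma(q^e)} 1/q^e$. The inner-sum estimate underlying Lemma~\ref{lem:sigmadivides}, applied with $d = p$ (and target value $2x$ in view of $\sigma(q^e) \leq 2 q^e$), yields $S(p) \ll (\log x)^2/p$, whence case (B) contributes $\ll x(\log x)^4/p^2$. Summing both cases over $p > z$:
\[ \ll x(\log x)^4 \sum_{p > z} \frac{1}{p^2} \ll \frac{x(\log x)^4}{z} \leq \frac{x(\log x)^2}{\sqrt z}, \]
where the last inequality uses the standing assumption $z > (\log x)^4$. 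The main technical obstacle is the $e \geq 2$ portion of case (A), where one must couple the residue-class count ($\leq e$ classes modulo $p^2$) with the positivity bound $\sigma(q^e) \geq p^2$ to extract the necessary $1/p^2$ decay per residue; the remainder of the argument is routine given Lemma~\ref{lem:sigmadivides}.
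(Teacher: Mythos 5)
Your proof is correct, and it reaches the stated bound by a somewhat different route than the paper. Both arguments start from the multiplicativity $\sigma(n)=\prod_{q^e\parallel n}\sigma(q^e)$, but the paper's dichotomy is: either $p\mid\sigma(q^e)$ for some \emph{proper} prime power $q^e\parallel n$ — in which case $n$ has a squarefull divisor exceeding $z/2$, and the count of such $n$ is $\ll xz^{-1/2}$ outright (this squarefull-divisor step is the sole source of the exponent $1/2$ in the lemma) — or two distinct primes $q_1,q_2\parallel n$ satisfy $q_i\equiv-1\pmod p$, which is handled by the same squared reciprocal sum you use in your case (B), contributing $\ll x(\log x)^2z^{-1}$. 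You instead perform the complete valuation split (one factor $\sigma(q^e)$ divisible by $p^2$, versus two factors each divisible by $p$), push every case down to $\ll x(\log x)^{O(1)}p^{-2}$, and recover the $z^{-1/2}$ by trading logarithms against the preliminary reduction to $z>(\log x)^4$. What your approach buys: it avoids the squarefull-divisor trick at the cost of counting roots of $1+X+\cdots+X^e$ modulo $p^2$ (your observation that these are among the $(e+1)$-st roots of unity in the cyclic group $(\Z/p^2\Z)^{*}$, hence at most $e+1$ classes, is the right justification), and your dichotomy is genuinely exhaustive, whereas the paper's as literally stated omits the sub-case of a single prime $q\parallel n$ with $q\equiv-1\pmod{p^2}$ — precisely the $e=1$ part of your case (A). Two small polishing points: per residue class you bound only the smallest term $1/q^e$ by $2/p^2$, and should add that the remaining terms $q_0+jp^2$ of the progression contribute $\sum_{j\geq1}(jp^2)^{-e}\ll p^{-4}$ for $e\geq2$, which is dominated; and since you cannot literally cite the unstated inner-sum estimate from the proof of Lemma \ref{lem:sigmadivides}, you should note that $S(p)\ll(\log x)^2/p$ follows from the same mod-$p$ root-counting argument you already give for case (A).
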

\begin{proof} If $p^2 \mid \sigma(n)$, then either $p \mid \sigma(q^e)$ for a proper prime power $q^e$ exactly dividing $n$, or there are two distinct primes $q_1$ and $q_2$ exactly dividing $n$ with $q_1, q_2 \equiv -1\pmod{p}$. In the former case, $n$ has a squarefull divisor of size $\geq q^e \geq \frac{1}{2}\sigma(q^e) > p/2 > z/2$. The number of such $n$ is $\ll x z^{-1/2}$, which is acceptable for us. For a given $p$, the number of $n$ arising in the second case is
	\[ \leq x \Bigg(\sum_{\substack{q \leq x \\ q\equiv -1\pmod{p}}}\frac{1}{q}\Bigg)^2 \leq x \left(\sum_{j \leq x/p}\frac{1}{pj-1}\right)^2 \ll x (\log{x})^2 p^{-2}.  \]
Summing over $p > z$, we find that the total number of $n$ that can arise from this case is $\ll x(\log{x})^2 z^{-1}$, which is also acceptable.
\end{proof}

\begin{proof}[Proof or Proposition \ref{prop:thickness}] We may suppose that our fixed constant $A$ satisfies $A\geq 5$. We will show that for such $A$, the proposition holds with $c(A) = 50 A^3$. 
	
Let $\Ss_1$ consist of those $m \in \Ss$ for which either
\begin{enumerate}
	\item $m \leq x/(\log{x})^{2A}$, or
\item $\Omega(m) \geq 8A^2 (\log\log{x})^2$, or
\item $p^2 \mid m$ for some $p > (\log{x})^{3A}$.
\end{enumerate}
We let $\Ss_2$ consist of the remaining elements of $\Ss$. By Lemmas \ref{lem:sigmaOmega} and \ref{lem:sigmaRepeated}, the size of $\sigma^{-1}(\Ss_1)$ is $o(x/(\log{x})^A)$ as $x\to\infty$ (uniformly in the choice of $\Ss$). 

We turn now to $\Ss_2$. To each $m \in \Ss_2$, we associate the divisor $m'$ of $m$ defined by 
\[ m' := \prod_{\substack{p^e \parallel m \\ p > (\log{x})^{3A}}}p. \]
Then $m'$ is squarefree, and \begin{equation}\label{eq:omegam'} \omega(m') \leq \Omega(m) < 8A^2 (\log\log{x})^2. \end{equation} Moreover, assuming that $x$ is large, since $m > (x/\log x)^{2A}$,
\begin{equation}\label{eq:omegam'2} m' \geq m/((\log{x})^{3A})^{\Omega(m)} > \frac{x/(\log{x})^{2A}}{\exp(24A^3 (\log\log{x})^3)} > x/\exp(25 A^3 (\log\log{x})^3). \end{equation}
We bound the number of $\sigma$-preimages of $m$ from above by the number of $n$ for which $\sigma(n) \in [1,x]$ is a multiple of $m'$. By Lemma \ref{lem:sigmadivides}, along with \eqref{eq:omegam'} and \eqref{eq:omegam'2}, the number of such $n$ is
\begin{align*} \leq \frac{x}{m'} (c_9\log{x})^{3\omega(m')} &\leq \exp(25A^3(\log\log{x})^3) (c_9 \log{x})^{24A^2 (\log\log{x})^2}  \\
	&\leq \exp(49A^3 (\log\log{x})^3),
\end{align*}
say. Summing over the elements of $\Ss_2$, we find that
\[ \#\sigma^{-1}(\Ss_2) \leq \exp(49A^3 (\log\log{x})^3) \cdot \#\Ss_2. \]
So if we assume that $\#\Ss \leq x/\exp(50A^3 (\log\log{x})^3)$, then  $\#\sigma^{-1}(\Ss_2) = o(x/(\log{x})^A)$, as $x\to\infty$. Combined with our earlier estimate on the size of $\sigma^{-1}(\Ss_1)$, this shows that $\#\sigma^{-1}(\Ss) \leq x/(\log{x})^A$ once $x$ is sufficiently large.
\end{proof}

\section{Proof of Theorem \ref{thm:HS}}

The key to the proof of Theorem \ref{thm:HS} is the following inequality of Robin \cite[Th\'eor\`{e}me 2]{robin84}.

\begin{lem}\label{lem:robin} For each natural number $n \geq 3$, 
	\[ \sigma(n) \leq e^{\gamma} n\log\log{n} + 0.6483 \frac{n}{\log\log{n}}. \]
\end{lem}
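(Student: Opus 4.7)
The plan is to prove Robin's inequality by reducing to a special class of integers where $\sigma(n)/n$ is near-maximal, and then applying explicit forms of Mertens' theorems. My starting point is the observation that $h(n) := \sigma(n)/n$ is multiplicative with $h(n) = \prod_{p^e \parallel n}(1 - p^{-e-1})/(1 - 1/p)$, so $h(n) < \prod_{p \mid n}p/(p-1)$, with the supremum over integers having a fixed radical attained in the limit as exponents grow. This indicates that for the purpose of bounding $\sigma(n)/n$ it is enough to consider \emph{colossally abundant} numbers, those $n$ which for some $\varepsilon > 0$ maximize $\sigma(n)/n^{1+\varepsilon}$. By a classical structure theorem of Alaoglu--Erd\H{o}s, each such $n$ has the form $\prod_{p \leq y}p^{e_p}$ with the exponents non-increasing in $p$, so in particular the set of prime divisors is an initial segment of the primes.

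Second, I would argue that it suffices to verify the inequality on colossally abundant $n$. Let
\[ G(n) := \sigma(n)/n - e^\gamma \log\log n - 0.6483/\log\log n. \]
If there were a counterexample $n$, then one could walk from $n$ to a nearby colossally abundant $n^\ast$ by adjusting exponents, using the extremal property of colossally abundant numbers together with the slow variation of $\log\log$, to produce a colossally abundant counterexample. At such $n^\ast$, the prime factorization is essentially a primorial $N_k := \prod_{i \leq k}p_i$ with slight exponent adjustments, and $h(n^\ast)$ is close to $\prod_{p \leq p_k}(1 - 1/p)^{-1}$.

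Third, I would invoke explicit analytic estimates. Rosser--Schoenfeld give, for $y$ past a small threshold,
\[ \prod_{p \leq y}\left(1 - \frac{1}{p}\right)^{-1} \leq e^\gamma \log y \left(1 + \frac{1}{2\log^2 y}\right), \]
while an explicit form of the prime number theorem provides $\log N_k = \theta(p_k) = p_k(1 + o(1))$, whence $\log\log N_k = \log p_k + o(1)$. Substituting, one obtains the shape $\sigma(N_k)/N_k \leq e^\gamma \log\log N_k + C/\log\log N_k$ for some absolute $C$, and the same inequality transfers (with a slightly larger constant) to general colossally abundant numbers by handling the exponent adjustments separately. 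Finally, a finite computation would dispatch those $n$ below the threshold at which the asymptotic estimates become effective.

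The main obstacle is pinning down the explicit constant $0.6483$. A weaker constant (say any $C > e^\gamma/2$) drops out of the sketch above with crude bookkeeping, but $0.6483$ requires careful control of the error terms in the Rosser--Schoenfeld version of Mertens' product, a precise analysis of how far a colossally abundant $n$ can deviate from its nearest primorial in the quantity $\sigma(n)/n$, and a direct numerical check of the inequality for all $n$ below the threshold (Robin carried this out up to $n$ on the order of $10^{10}$). Cutting any of these corners loosens the constant.
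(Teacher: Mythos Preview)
The paper does not prove this lemma at all: it is stated as a citation to Robin's original article \cite[Th\'eor\`eme 2]{robin84} and used as a black box in the proof of Theorem~\ref{thm:HS}. So there is no ``paper's own proof'' to compare against; the intended response here is simply to invoke Robin's result.

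That said, your sketch is broadly faithful to how Robin actually established the inequality: the reduction to colossally abundant (or superabundant) numbers via the Alaoglu--Erd\H{o}s structure theorem, the explicit Rosser--Schoenfeld form of Mertens' product theorem, and a finite verification below the effective threshold are exactly the ingredients of the original argument. You are also right that the specific constant $0.6483$ is the delicate part and does not fall out of a soft version of the argument. Where your outline is weakest is the second step, the claim that a general counterexample can be ``walked'' to a colossally abundant one while preserving $G(n)>0$; this requires a genuine monotonicity-type argument (Robin works via the function $\sigma(n)/(n\log\log n)$ and shows its local maxima over suitable intervals occur at colossally abundant numbers), and ``slow variation of $\log\log$'' alone does not quite do it. For the purposes of this paper, though, a citation suffices.
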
 

\begin{proof}[Proof of Theorem \ref{thm:HS}] Suppose for the sake of contradiction that $f(n) \geq HS(n)$ but that $n$ is not practical. We assume to begin with that $n > 14$, treating small $n$ at the end of the proof. Let $d$ be the practical component of $n$, and write $n=dq$. Then $q > 1$, and 
\[ P^{-}(q) > \sigma(d)+1 = f(n)+1 > HS(n) > n^{1/2}, \]
where in fact the last inequality holds for all $n > 6$. It follows that $q$ is prime and $P^{-}(q)=q$. Hence, $HS(n) < q = n/d$, and so
\[ d < \frac{n}{HS(n)}. \]
Also, since $\sigma(d) =f(n) \geq HS(n)$, we have
\[ q = n/d \leq \frac{n}{d} \frac{\sigma(d)}{HS(n)}. \]
Multiplying the last two displayed inequalities shows that
\[ n =dq \leq \frac{\sigma(d)}{d}\left(\frac{n}{HS(n)}\right)^2 = \frac{\sigma(d)}{d}n (e^{\gamma}\log\log{n})^{-1},\]
and so 
\begin{equation}\label{eq:contrary}
 \frac{\sigma(d)}{d} \geq e^{\gamma}\log\log{n}.\end{equation}
Since $q > n^{1/2}$ and $n=dq$, we have that $q > d$, and so
\[ \log\log{n} = \log\log{(qd)} > \log\log{(d^2)} = \log\log{d} + \log{2}; \]
thus, \eqref{eq:contrary} gives
\begin{align}\notag \frac{\sigma(d)}{d} &\geq e^{\gamma}\log\log{d} + e^{\gamma} \log{2} \\ 
	\label{eq:contrary2} &> e^{\gamma}\log\log{d}+1.2345. \end{align}
We now derive a contradiction to Robin's inequality. We can assume that $d \geq 6$; otherwise, $\sigma(d)/d \leq 7/4$, and \eqref{eq:contrary} then implies that $n \leq 14$, contrary to hypothesis. By Lemma \ref{lem:robin},
\[ \frac{\sigma(d)}{d} \leq e^{\gamma}\log\log{d} + \frac{0.6483}{\log\log{d}}. \]
Combining this inequality with \eqref{eq:contrary2}, we obtain $0.6483/\log\log{d} > 1.2345$. But this fails for all $d \geq 6$. This contradiction completes the proof for $n > 14$.

It remains to treat the cases when $3 < n \leq 14$. For odd $n > 3$, the hypotheses of the theorem are never satisfied, since $f(n)=1 < HS(5) \leq HS(n)$. So the only possible exceptions to the theorem have $n$ even. The non-practical even values of $n \leq 14$ are $n=10$ and $n=14$, and in both cases, $f(n)=3 < HS(n)$, so the theorem holds.
\end{proof}

\section*{Acknowledgements}
We thank Greg Martin and Carl Pomerance for helpful conversations.

\providecommand{\bysame}{\leavevmode\hbox to3em{\hrulefill}\thinspace}
\providecommand{\MR}{\relax\ifhmode\unskip\space\fi MR }
\providecommand{\MRhref}[2]{%
  \href{http://www.ams.org/mathscinet-getitem?mr=#1}{#2}
}
\providecommand{\href}[2]{#2}

\end{document}